\title{Sequential discontinuity and first-order problems}
\author{
Arno Pauly
\institute{Swansea University\\Swansea, UK}
\email{Arno.M.Pauly@gmail.com}
\and
Giovanni Sold\`a
\institute{Ghent University\\Ghent, Belgium}
\email{Giovanni.A.Solda@gmail.com}
}
\begin{document}
\theoremstyle{definition}
\newtheorem{theorem}{Theorem}
\newtheorem{definition}[theorem]{Definition}
\newtheorem{problem}[theorem]{Problem}
\newtheorem{assumption}[theorem]{Assumption}
\newtheorem{corollary}[theorem]{Corollary}
\newtheorem{proposition}[theorem]{Proposition}
\newtheorem{lemma}[theorem]{Lemma}
\newtheorem{observation}[theorem]{Observation}
\newtheorem{fact}[theorem]{Fact}
\newtheorem{question}[theorem]{Open Question}
\newtheorem{conjecture}[theorem]{Conjecture}
\newtheorem{example}[theorem]{Example}
\newtheorem{remark}[theorem]{Remark}
\newcommand{\dom}{\operatorname{dom}}
\newcommand{\id}{\textnormal{id}}
\newcommand{\Cantor}{{\{0, 1\}^\mathbb{N}}}
\newcommand{\Baire}{{\mathbb{N}^\mathbb{N}}}
\newcommand{\Lev}{\textnormal{Lev}}
\newcommand{\hide}[1]{}
\newcommand{\mto}{\rightrightarrows}
\newcommand{\uint}{{[0, 1]}}
\newcommand{\bft}{\mathrm{BFT}}
\newcommand{\lbft}{\textnormal{Linear-}\mathrm{BFT}}
\newcommand{\pbft}{\textnormal{Poly-}\mathrm{BFT}}
\newcommand{\sbft}{\textnormal{Smooth-}\mathrm{BFT}}
\newcommand{\ivt}{\mathrm{IVT}}
\newcommand{\cc}{\textrm{CC}}
\newcommand{\lpo}{\textrm{LPO}}
\newcommand{\llpo}{\textrm{LLPO}}
\newcommand{\aou}{AoU}
\newcommand{\Ctwo}{C_{\{0, 1\}}}
\newcommand{\name}[1]{\textsc{#1}}
\newcommand{\C}{\textrm{C}}
\newcommand{\CC}{\textrm{CC}}
\newcommand{\UC}{\textrm{UC}}
\newcommand{\ic}[1]{\textrm{C}_{\sharp #1}}
\newcommand{\xc}[1]{\textrm{XC}_{#1}}
\newcommand{\me}{\name{P}.~}
\newcommand{\etal}{et al.~}
\newcommand{\eval}{\operatorname{eval}}
\newcommand{\rank}{\operatorname{rank}}
\newcommand{\Sierp}{Sierpi\'nski }
\newcommand{\isempty}{\operatorname{IsEmpty}}
\newcommand{\spec}{\textrm{Spec}}
\newcommand{\leqW}{\leq_{\textrm{W}}}
\newcommand{\leW}{<_{\textrm{W}}}
\newcommand{\equivW}{\equiv_{\textrm{W}}}
\newcommand{\geqW}{\geq_{\textrm{W}}}
\newcommand{\pipeW}{|_{\textrm{W}}}
\newcommand{\nleqW}{\nleq_{\textrm{W}}}
\newcommand{\leqsW}{\leq_{\textrm{sW}}}
\newcommand{\lesW}{<_{\textrm{sW}}}
\newcommand{\equivsW}{\equiv_{\textrm{sW}}}
\newcommand{\geqsW}{\geq_{\textrm{sW}}}
\newcommand{\pipesW}{|_{\textrm{sW}}}
\newcommand{\nleqsW}{\nleq_{\textrm{sW}}}
\newcommand{\Det}{\textrm{Det}}

\newcommand{\imp}{\rightarrow}
\newcommand{\Imp}{\Rightarrow}
\newcommand{\omone}{\ensuremath{\omega + 1}}
\newcommand{\Nb}{\mathbb{N}}
\newcommand{\accn}{\mathsf{ACC}_{\Nb}}
\newcommand{\seqaccn}{\mathsf{SEQACC}_{\Nb}}
\newcommand{\la}{\langle}
\newcommand{\ra}{\rangle}
\newcommand{\conc}{^{\smallfrown}}
\newcommand{\neqX}[1]{\mathalpha{\neq}_{#1}}
\newcommand{\cbr}{\mathrm{CBr}}

\newcounter{saveenumi}
\newcommand{\seti}{\setcounter{saveenumi}{\value{enumi}}}
\newcommand{\conti}{\setcounter{enumi}{\value{saveenumi}}}

\maketitle

\begin{abstract}
    We explore the low levels of the structure of the continuous Weihrauch degrees of first-order problems. In particular, we show that there exists a minimal discontinuous first-order degree, namely that of $\accn$, without any determinacy assumptions. The same degree is also revealed as the least sequentially discontinuous one, i.e.~the least degree with a representative whose restriction to some sequence converging to a limit point is still discontinuous.

    The study of games related to continuous Weihrauch reducibility constitutes an important ingredient in the proof of the main theorem. We present some initial additional results about the degrees of first-order problems that can be obtained using this approach. 
\end{abstract}


\section{Introduction}
A well-known continuity notion is that convergent sequences get mapped to convergent sequences. As spaces of relevance for computable analysis are sequential, this notion adequately characterizes continuity for functions for us. Multivalued functions, on the other hand, can exhibit a very different behaviour. To one extreme, the multivalued function $\mathrm{NON} : \Cantor \mto \Cantor$ with $q \in \mathrm{NON}(p)$ iff $q \nleq_\mathrm{T} p$ is discontinuous, yet whenever restricted to a subset of its domain of cardinality less than the continuum even has a constant realizer. It thus seems a natural question to ask when discontinuity of a multivalued function stems from its behaviour restricted to some convergent sequence.

The answer to this turns out to be related to the notion of first-order part of a Weihrauch degree, proposed by Dzafahrov, Solomon and Yokoyama \cite{solomon}. It is defined as: $$^1(f) := \max_{\leqW} \{g : \subseteq \Baire \mto \mathbb{N} \mid g \leqW f\}$$

It turns out that there is a weakest Weihrauch degree with non-trivial first-order part (up to continuous reductions), namely the degree of $\accn$ which is the restriction of $\C_\mathbb{N}$ to sets $A \subseteq \mathbb{N}$ with $|\mathbb{N} \setminus A| \leq 1$ studied in \cite{kreuzer}. We show the following:

\begin{theorem}
\label{theo:main}
The following are equivalent for a multi-valued function $f : \subseteq \Baire \mto \Baire$:
\begin{enumerate}
\item $^1(f)$ is discontinuous.
\item There exists a convergent sequence $(a_n)_{n \in \mathbb{N}}$ with $\overline{\{a_n \mid n \in \mathbb{N}\}} \subseteq \dom(f)$ such that $f|_{\overline{\{a_n \mid n \in \mathbb{N}\}}}$ is discontinuous.
\item $\accn \leqW^* f$
\end{enumerate}
\end{theorem}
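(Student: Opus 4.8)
The plan is to prove the cycle of implications $(1)\Rightarrow(3)\Rightarrow(2)\Rightarrow(1)$, isolating the passage from Baire-valued sequential discontinuity to first-order discontinuity as the one genuinely hard step. Throughout I use the standard fact that a first-order problem $g:\subseteq\Baire\mto\mathbb{N}$ admits a continuous realizer if and only if for every $p\in\dom(g)$ there are a prefix $\sigma\sqsubset p$ and a value $n$ with $n\in g(q)$ for all $q\in[\sigma]\cap\dom(g)$; negating this yields a canonical discontinuity witness.

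For $(1)\Rightarrow(3)$ I would first prove the minimality of $\accn$ among discontinuous first-order problems: if $g:\subseteq\Baire\mto\mathbb{N}$ is discontinuous, then $\accn\leqW^* g$. Fix a discontinuity witness $p\in\dom(g)$, so that for every $\sigma\sqsubset p$ and every $n$ there is $q\in[\sigma]\cap\dom(g)$ with $n\notin g(q)$. The reduction reads an $\accn$-instance, which reveals at most one forbidden number $k$. The preprocessing writes longer and longer prefixes of $p$; if $k$ is ever revealed, say after committing to $\sigma\sqsubset p$, it diverts to some $q=q(k,\sigma)\in[\sigma]\cap\dom(g)$ with $k\notin g(q)$ and completes the output to $q$; if no $k$ appears it outputs $p$. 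Any answer $m\in g(\text{output})$ then satisfies $m\neq k$ in the first case and is unconstrained in the second, so returning $m$ solves $\accn$. These maps are continuous relative to the fixed oracle data $p$ and the choice function $q(\cdot,\cdot)$, giving $\accn\leqW^* g$. Since ${}^1(f)$ is itself first-order with ${}^1(f)\leqW^* f$, hypothesis (1) lets me instantiate $g={}^1(f)$ and compose, yielding $\accn\leqW^*{}^1(f)\leqW^* f$. (The converse $(3)\Rightarrow(1)$ is then immediate: as $\accn$ is first-order, (3) gives $\accn\leqW^*{}^1(f)$, and discontinuity is preserved upward under $\leqW^*$, so ${}^1(f)$ is discontinuous.)

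For $(3)\Rightarrow(2)$ I would transport a convergent sequence across the reduction. Fix continuous preprocessing $\Phi$ and postprocessing $\Psi$ witnessing $\accn\leqW^* f$. Let $b_k$ be the $\accn$-instance forbidding $k$ only after stage $k$, and $b_\infty$ the instance forbidding nothing; then $b_k\to b_\infty$, so $a_k:=\Phi(b_k)\to\Phi(b_\infty)=:a$ inside $\dom(f)$. The restriction of $\accn$ to $\{b_k\}\cup\{b_\infty\}$ is discontinuous, since a continuous realizer would produce $c_k\neq k$ eventually equal to a fixed $c_\infty$, impossible at $k=c_\infty$. Hence $f$ restricted to $\overline{\{a_k\}}=\{a_k\}\cup\{a\}$ must be discontinuous: otherwise a continuous realizer $F$ of $f|_{\overline{\{a_k\}}}$ would make $b\mapsto\Psi(b,F(\Phi(b)))$ a continuous realizer of $\accn$ on $\{b_k\}\cup\{b_\infty\}$ by joint continuity of $\Psi$, a contradiction. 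This also rules out the $a_k$ being eventually constant, since $f$ is automatically continuous on a finite discrete set, so $(a_k)$ is a genuine convergent sequence.

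The main obstacle is $(2)\Rightarrow(1)$: extracting a discontinuous first-order $g\leqW^* f$ from the discontinuity of $f|_K$ on $K=\overline{\{a_n\}}$, with $a_n\to a$. Here discontinuity means that no selection $z_n\in f(a_n)$ converges to any $y\in f(a)$. In the single-valued case this is easy: there is one level $m$ with $z_n{\restriction}m\neq y{\restriction}m$ infinitely often, and the static probe $h(z)=z{\restriction}m$ makes $h\circ f|_K$ a discontinuous first-order problem below $f$. In the multivalued case this breaks down, because the sets $f(a_n)$ and $f(a)$ allow selections to dodge any fixed truncation, so the level at which disagreement is forced depends on the answer the realizer actually produces. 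Managing this quantifier alternation — over all $y\in f(a)$ and all selections — is exactly what the game-theoretic reformulation of $\leqW^*$ developed in the paper is for: I would set up the reduction game for the first-order problem whose instances name points of $K$, and show that the reducer wins by choosing its probe adaptively in response to the revealed $f$-answer. Carrying this out uniformly, and without invoking determinacy, is the crux; once a discontinuous first-order $g\leqW^* f$ is produced, (1) follows, since $g\leqW^*{}^1(f)$ forces ${}^1(f)$ to be discontinuous.
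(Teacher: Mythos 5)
Your treatment of $(1)\Leftrightarrow(3)$ is correct, and it takes a more elementary route than the paper does: where the paper obtains the minimality of $\accn$ among discontinuous first-order problems from the game-theoretic dichotomy (Theorem \ref{theo:neqminimal}) together with a determinacy proof for first-order Wadge games (Lemma \ref{sec:det-games-N}), you extract the reduction directly from the prefix-and-value characterisation of continuous realizability; your ``standard fact'' is in substance exactly the dichotomy that Lemma \ref{sec:det-games-N} establishes, and your divert-on-forbidden-value construction is sound. The genuine problems lie in the other two implications.

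First, $(3)\Rightarrow(2)$ is wrong at a key step. With your instances ($b_k$ forbids $k$, revealed at stage $k$), the restriction of $\accn$ to $\{b_k \mid k \in \Nb\}\cup\{b_\infty\}$ is in fact \emph{continuous}: a realizer reads the first entry of the enumeration; if it sees $0$ being forbidden (so the input is $b_0$) it answers $1$, otherwise it answers $0$, which is a correct answer for $b_\infty$ and for every $b_k$ with $k\geq 1$. Your contradiction (``impossible at $k=c_\infty$'') breaks down because continuity only forces $c_k=c_\infty$ for $k$ above some threshold $K$, and nothing forces $c_\infty\geq K$; in the realizer just given, $c_\infty=0<K$. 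Consequently the transported sequence $a_k=\Phi(b_k)$ need not witness (2) at all: taking $f=\accn$ and $\Phi=\id$ gives a concrete failure. This is precisely why the paper works with $\seqaccn$ and Lemma \ref{sec:accn-seqaccn}: there the instance with index $\la n,m\ra$ forbids $n$, so \emph{every} value is forbidden by instances arbitrarily close to the limit, and a realizer committing to a value $v$ on a neighbourhood of the limit point is refuted by $b_{\la v,m\ra}$ with $m$ large. Re-running your transport argument with these instances repairs the step.

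Second, $(2)\Rightarrow(1)$ --- which you yourself flag as the crux --- is not proved: the paragraph ends with a plan (``choose the probe adaptively in the reduction game'') rather than an argument, and no determinacy-free execution is offered. This is exactly where the paper's Lemma \ref{sec:equiv-disc-accn} does its real work, and notably it does so by proving $(2)\Rightarrow(3)$ directly rather than $(2)\Rightarrow(1)$: one first shows the combinatorial claim that for every $p\in f(a)$ there is a prefix $p_{\leq k}$ with $f(a_j)\cap[p_{\leq k}]=\emptyset$ for infinitely many $j$ (if this failed, interleaving the selections $q_{k,j}$ would yield a realizer of $f$ continuous on $\overline{\{a_n : n\in\Nb\}}$), and then uses the countably many such prefixes $w_i$ and a selection function $\lambda$ as oracles to build $\accn\leqW^* f$: copy $a$, divert to a nearby $a_{\lambda(n,i)}$ with $f(a_{\lambda(n,i)})\cap[w_i]=\emptyset$ if $i$ is forbidden at stage $n$, and read off from the answer either that it extends some $w_i$ (so $i$ is a safe output) or that some $i$ was forbidden (so $i+1$ is safe). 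Since you already have $(3)\Rightarrow(1)$, closing your cycle through $(2)\Rightarrow(3)$ in this way is strictly easier than the $(2)\Rightarrow(1)$ you set up, and avoids any appeal to games or determinacy.
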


We prove our main theorem in two separate parts. The equivalence between $2.$ and $3.$ follows by a direct analysis of how a multivalued function with domain $\omone$ can be discontinuous; it is given as Lemma \ref{sec:equiv-disc-accn} in Section \ref{sec:domainanalysis}. The equivalence between $1.$ and $3.$ is built on the generalization of Wadge-style games to multivalued functions proposed in \cite{pauly-nobrega}. We elucidate this in Section \ref{sec:games}. The relevant half of Theorem \ref{theo:main} is summarized in Corollary \ref{corr:mainhalf}, but we obtain additional results of independent interest.

\section{Background}
We assume that the reader is familiar with the basics of represented spaces and Weihrauch reducibility, and refer to \cite{pauly-synthetic} as reference for the former and \cite{pauly-handbook} as reference for the latter. In this section we provide the lesser known definitions we require.

\subsection{Represented spaces}

\subsubsection*{Completions of a represented space}
We use the notions of completions of represented spaces and multiretraceability discussed in \cite{damir,gherardi2,gherardi6}, but we will introduce these concepts somewhat differently than those works. The reason is that \emph{the completion} of a represented space as defined there is not compatible with computable isomorphisms, and we thus prefer to speak of \emph{a completion} -- in particular since for our purposes any completion in the sense we introduce below is equally good.

\begin{definition}[{cf.~\cite[Proposition 2.6]{gherardi6}}]
A represented space $\mathbf{X}$ is \emph{multiretraceable}, if every computable partial function $f : \subseteq \Cantor \to \mathbf{X}$ has a total computable extension $F : \Cantor \to \mathbf{X}$.
\end{definition}

\begin{definition}\label{def:completion}
A \emph{completion} of a represented space $\mathbf{X}$ consists of a computable embedding $\iota : \mathbf{X} \hookrightarrow \mathbf{Y}$ into a multitraceable space $\mathbf{Y}$.
\end{definition}

\begin{proposition}[\cite{damir}, cf.~\cite{gherardi6}]
Every represented space has a completion.
\end{proposition}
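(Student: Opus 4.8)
The plan is to construct an explicit completion by adjoining a single ``divergence'' point $\bot$ to $\mathbf{X}$ and using a representation in which stalling or runaway computations are automatically read as names of $\bot$. Write $\mathbf{X} = (X, \delta_X)$ with $\delta_X : \subseteq \Baire \to X$, set $\overline{X} := X \sqcup \{\bot\}$, and define a \emph{total} representation $\delta_{\overline{X}} : \Baire \to \overline{X}$ by a stall-symbol encoding: reserve $0$ as a ``stall'' and decode $q \in \Baire$ by deleting every occurrence of $0$ and subtracting $1$ from each remaining entry. If this yields an infinite string lying in $\dom(\delta_X)$, let $\delta_{\overline{X}}(q)$ be its $\delta_X$-value; in every other case (the decoded string is finite, or is infinite but not a $\delta_X$-name) set $\delta_{\overline{X}}(q) := \bot$. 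This map is total and surjective, since $000\cdots$ names $\bot$ and every $x \in X$ is named by the stall-free encoding of any of its $\delta_X$-names.

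Next I would take $\iota : \mathbf{X} \hookrightarrow \overline{\mathbf{X}}$ to be the set inclusion and check that it is a computable embedding. In the forward direction, translate a $\delta_X$-name $p$ of $x$ into the $\delta_{\overline{X}}$-name obtained by adding $1$ to each entry of $p$ (inserting no stalls); decoding returns $p$, so this is a computable realizer with value $\iota(x)$. In the backward direction, given a $\delta_{\overline{X}}$-name $q$ of a point of $X$, I simply run the decoding procedure (delete stalls, subtract $1$): since the value lies in $X$ rather than $\{\bot\}$, the decoded string is by definition an infinite member of $\dom(\delta_X)$, so the procedure produces a genuine $\delta_X$-name and is computable. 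Hence $\iota$ is a computable isomorphism onto its image.

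The heart of the argument is verifying that $\overline{\mathbf{X}}$ is multiretraceable. Let $f : \subseteq \Cantor \to \overline{\mathbf{X}}$ be computable, realized by a machine $M$ which, for each $p \in \dom(f)$, writes an infinite $\delta_{\overline{X}}$-name of $f(p)$. I define a total realizer $\Phi : \Cantor \to \Baire$ by \emph{interleaving with stalls}: while simulating $M(p)$, emit each symbol $M$ outputs, and emit a stall $0$ at every step at which $M$ has not yet produced a new symbol. Then $\Phi$ is total and computable, and $F := \delta_{\overline{X}} \circ \Phi$ is a total computable map $\Cantor \to \overline{\mathbf{X}}$. For $p \in \dom(f)$ the inserted stalls are deleted again by the decoding, so $\Phi(p)$ and $M(p)$ decode to the same string and thus $F(p) = f(p)$; for $p \notin \dom(f)$ we need nothing beyond totality, which holds because $\Phi$ is total and $\delta_{\overline{X}}$ is defined everywhere (concretely, if $M(p)$ stops producing output then $\Phi(p)$ ends in infinitely many stalls and $F(p) = \bot$). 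Therefore $F$ is a total computable extension of $f$, as required.

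The only delicate point I anticipate is ensuring that the extension is genuinely total \emph{and} computable at the same time: the stall symbol is exactly the device that lets a diverging or stalled computation keep writing a well-defined infinite string, which is the reason $\bot$ must be added and why its names are precisely the ``limits of non-productive computations''. Everything else is routine bookkeeping about the encoding, and if needed elsewhere I would remark that the construction also preserves admissibility.
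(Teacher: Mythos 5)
Your proof is correct, and it is essentially the standard construction from the literature the paper cites for this proposition (the paper itself gives no inline proof): adjoin a point $\bot$, make the representation total via a stall/padding symbol so that non-productive or invalid computations name $\bot$, and obtain total extensions by interleaving stalls into a simulated realizer. No gaps; the three verifications (totality/surjectivity of the new representation, computability of the embedding in both directions, and the stall-insertion argument for multiretraceability) are exactly the points that need checking.
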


\begin{proposition}
\label{prop:completions}
If $\iota_1 : \mathbf{X} \hookrightarrow \mathbf{Y}_1$ and $\iota_2 : \mathbf{X} \hookrightarrow \mathbf{Y}_2$ are completions of $\mathbf{X}$, then there is a computable multivalued function $T : \mathbf{Y}_1 \mto \mathbf{Y}_2$ such that $\iota_2 = T \circ \iota_1$.
\begin{proof}
Let $\delta_1 : \subseteq \Cantor \to \mathbf{Y}_1$ be the representation of $\mathbf{Y}_1$. We note that $\iota_2 \circ \iota_1^{-1} \circ \delta_1 : \subseteq \Cantor \to \mathbf{Y}_2$ is a partial computable function, and thus has computable total extension $F : \Cantor \to \mathbf{Y}_2$. Now $T := F \circ \delta_1^{-1}$ is the desired map.
\end{proof}
\end{proposition}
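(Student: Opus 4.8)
The plan is to prove Proposition~\ref{prop:completions} by exploiting the multitraceability of the target space $\mathbf{Y}_2$ together with the fact that $\iota_1$ is a computable embedding, so that its inverse is available on names. First I would fix a representation $\delta_1 : \subseteq \Cantor \to \mathbf{Y}_1$ of $\mathbf{Y}_1$. The key observation is that the partial map $\iota_2 \circ \iota_1^{-1} \circ \delta_1 : \subseteq \Cantor \to \mathbf{Y}_2$ is computable: on a $\delta_1$-name $p$ of some $\iota_1(x)$ we can apply $\iota_1^{-1}$ (using that $\iota_1$ is a computable embedding, hence its partial inverse on the image is computable) to recover a name of $x \in \mathbf{X}$, then apply the computable embedding $\iota_2$ to obtain a name of $\iota_2(x) \in \mathbf{Y}_2$.

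Next I would invoke multitraceability of $\mathbf{Y}_2$. By definition, since $\mathbf{Y}_2$ is multitraceable, the partial computable function $\iota_2 \circ \iota_1^{-1} \circ \delta_1 : \subseteq \Cantor \to \mathbf{Y}_2$ admits a total computable extension $F : \Cantor \to \mathbf{Y}_2$. The point of passing to a total extension is that it is defined on \emph{all} of $\Cantor$, so in particular on every $\delta_1$-name of every point of $\mathbf{Y}_1$, not merely those coming from the image of $\iota_1$.

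Finally I would define the desired multivalued map as $T := F \circ \delta_1^{-1}$, i.e.\ on input $y \in \mathbf{Y}_1$, nondeterministically pick a $\delta_1$-name $p$ of $y$ and output $F(p)$. This $T$ is computable because $F$ is computable and $\delta_1^{-1}$ is realized by the identity on names. It remains to check the commutativity $\iota_2 = T \circ \iota_1$: for $x \in \mathbf{X}$, any $\delta_1$-name $p$ of $\iota_1(x)$ lies in the domain of the original partial map, where $F$ agrees with $\iota_2 \circ \iota_1^{-1} \circ \delta_1$, so $F(p) = \iota_2(x)$; since this holds for every name of $\iota_1(x)$, we get $T(\iota_1(x)) = \{\iota_2(x)\}$, as required.

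The step I expect to require the most care is verifying that $\iota_2 \circ \iota_1^{-1} \circ \delta_1$ is genuinely computable as a partial function, which rests on the precise definition of \emph{computable embedding}: one must ensure that $\iota_1^{-1}$ is computable on names of points in the range of $\iota_1$, and that the composition's domain (the set of $\delta_1$-names of points of $\mathbf{X} \subseteq \mathbf{Y}_1$) is exactly where the agreement with $\iota_2$ is needed for commutativity. Everything else is a routine application of multitraceability and the standard identification of computable multivalued maps between represented spaces with computable operations on names.
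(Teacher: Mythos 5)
Your proposal is correct and follows exactly the paper's own argument: compose $\iota_2 \circ \iota_1^{-1} \circ \delta_1$ as a partial computable function into $\mathbf{Y}_2$, extend it totally using multiretraceability of $\mathbf{Y}_2$, and set $T := F \circ \delta_1^{-1}$. The extra details you supply (why the partial map is computable from $\iota_1$ being a computable embedding, and the name-by-name verification that $T(\iota_1(x)) = \{\iota_2(x)\}$) are precisely the routine checks the paper leaves implicit.
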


We will use $\overline{\mathbf{X}}$ to denote some completion of $\mathbf{X}$ in the following, and will prove for each use that the specific choice of completion is irrelevant by invoking Proposition \ref{prop:completions}.

\subsubsection*{Layering of spaces}
\begin{definition}
Given two represented spaces $\mathbf{X}$, $\mathbf{Y}$ we define the represented space $\frac{\mathbf{X}}{\mathbf{Y}}$ to have the underlying set $\{\frac{x}{\cdot} \mid x \in \mathbf{X}\} \cup \{ \frac{\cdot}{y} \mid y \in \mathbf{Y}\}$ with the representation defined as follows: Let $\delta_\mathbf{X} : \subseteq \Cantor \to \mathbf{X}$, $\delta_\mathbf{Y} : \subseteq \Cantor \to \mathbf{Y}$ be the original representations. Then we introduce $\delta_{\mathrm{new}}(q) = \frac{\cdot}{\delta_\mathbf{Y}(q)}$ for any $q \in \dom(\delta_\mathbf{Y}) \subseteq \Cantor$ and $\delta_{\mathrm{new}}(w2p) = \frac{\delta_\mathbf{X}(p)}{\cdot}$ for $w \in \{0,1\}^*$, $p \in \dom(\delta_\mathbf{X})$.
\end{definition}
  
 Informally spoken, a point in $\frac{\mathbf{X}}{\mathbf{Y}}$ starts off looking like a point in $\mathbf{Y}$, and it might indeed be a point in $\mathbf{Y}$. It can also, at any moment, change to be some arbitrary point in $\mathbf{X}$ instead. For example, we find that $\mathbb{S} \cong \frac{\mathbf{1}}{\mathbf{1}}$.
 
 For subsets $A \subseteq \mathbf{X}$, $B \subseteq \mathbf{Y}$ we shall write $\frac{A}{\cdot} = \{\frac{x}{\cdot} \mid x \in A\}$ and $\frac{\cdot}{B} = \{\frac{\cdot}{y} \mid y \in B\}$.
 \begin{observation}
 We find that $\mathcal{O}(\frac{\mathbf{X}}{\mathbf{Y}}) = \{\frac{U}{\cdot} \mid U \in \mathcal{O}(\mathbf{X})\} \cup \{\frac{X}{\cdot} \cup \frac{\cdot}{V} \mid V \in \mathcal{O}(\mathbf{Y})\}$.
 \end{observation}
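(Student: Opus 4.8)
The plan is to verify the two set inclusions separately, using throughout the defining property that a subset $W$ of a represented space $\mathbf{Z}$ is open exactly when $\delta_\mathbf{Z}^{-1}(W)$ is relatively open in $\dom(\delta_\mathbf{Z})$. Write $\delta := \delta_{\mathrm{new}}$, and recall that names of $\frac{\mathbf{X}}{\mathbf{Y}}$ come in two shapes: the binary names $q \in \dom(\delta_\mathbf{Y})$ sending to $\frac{\cdot}{\delta_\mathbf{Y}(q)}$, and the names $w2p$ (with $w \in \{0,1\}^*$, $p \in \dom(\delta_\mathbf{X})$) sending to $\frac{\delta_\mathbf{X}(p)}{\cdot}$; the name space thus sits inside $\{0,1,2\}^\mathbb{N}$. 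The first family in the claim consists of sets living purely in the $\mathbf{X}$-layer, the second of sets containing the entire $\mathbf{X}$-layer, and the geometric content I want to extract is that once an open set meets the $\mathbf{Y}$-layer it must swallow all of $\frac{X}{\cdot}$.

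For the inclusion $\supseteq$ I would first treat $\frac{U}{\cdot}$ with $U \in \mathcal{O}(\mathbf{X})$: its $\delta$-preimage is $\{w2p \mid w \in \{0,1\}^*,\ p \in \delta_\mathbf{X}^{-1}(U)\}$, which is relatively open because $\delta_\mathbf{X}^{-1}(U)$ is open and each map $p \mapsto w2p$ is a homeomorphism onto a clopen cylinder, so the preimage is a union of relatively open pieces. For $\frac{X}{\cdot} \cup \frac{\cdot}{V}$ with $V \in \mathcal{O}(\mathbf{Y})$, the preimage is the union of $\delta_\mathbf{Y}^{-1}(V)$ with the set of all names containing a $2$; the key is that around any binary name $q$ of a point of $V$ one can choose a prefix $w' \in \{0,1\}^*$ so that the cylinder $[w']$ satisfies both that every binary extension of $w'$ in $\dom(\delta_\mathbf{Y})$ stays in $\delta_\mathbf{Y}^{-1}(V)$ and that every extension containing a $2$ lands in $\frac{X}{\cdot}$, so the whole cylinder maps into the set.

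For the inclusion $\subseteq$, let $W \in \mathcal{O}(\frac{\mathbf{X}}{\mathbf{Y}})$ and split on whether $W$ meets the $\mathbf{Y}$-layer. If it does not, then $W = \frac{U}{\cdot}$ for $U := \{x \mid \frac{x}{\cdot} \in W\}$, and pulling $\delta^{-1}(W)$ back along any $p \mapsto w2p$ exhibits $\delta_\mathbf{X}^{-1}(U)$ as open, whence $U \in \mathcal{O}(\mathbf{X})$. If it does, pick $\frac{\cdot}{y_0} \in W$ with binary name $q_0$; openness of $\delta^{-1}(W)$ yields a prefix $w_0 \in \{0,1\}^*$ of $q_0$ all of whose extensions in $\dom(\delta)$ map into $W$, and since $w_0 2 p$ is such an extension for every $p \in \dom(\delta_\mathbf{X})$ we obtain $\frac{X}{\cdot} \subseteq W$. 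Setting $V := \{y \mid \frac{\cdot}{y} \in W\}$ then gives $W = \frac{X}{\cdot} \cup \frac{\cdot}{V}$, and $\delta_\mathbf{Y}^{-1}(V) = \dom(\delta_\mathbf{Y}) \cap \delta^{-1}(W)$ is open because $\dom(\delta_\mathbf{Y}) \subseteq \Cantor$ carries the subspace topology inherited from $\dom(\delta)$, so $V \in \mathcal{O}(\mathbf{Y})$.

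The only genuinely delicate points, which I would write out most carefully, are the two "single prefix" arguments: I must ensure that one finite $w_0$ (resp.\ $w'$) simultaneously controls the binary extensions (keeping them inside $V$) and the $2$-containing extensions (throwing them into $\frac{X}{\cdot}$), and that relative openness computed inside $\{0,1,2\}^\mathbb{N}$ restricts correctly to $\dom(\delta_\mathbf{Y})$ via transitivity of subspace topologies. Everything else is routine manipulation of cylinders and preimages.
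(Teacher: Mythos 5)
Your proposal is correct and follows what is evidently the intended (omitted) argument: the paper states this as an observation without proof, and your verification via the characterization that $W$ is open iff $\delta_{\mathrm{new}}^{-1}(W)$ is relatively open in $\dom(\delta_{\mathrm{new}})$ --- with the key geometric point that an open set meeting the $\mathbf{Y}$-layer must absorb all of $\frac{X}{\cdot}$, since any cylinder $[w_0]$ around a binary name also contains the names $w_0 2 p$ for every $p \in \dom(\delta_{\mathbf{X}})$ --- is exactly the routine unwinding left to the reader. The only cosmetic slip is calling the image of $p \mapsto w2p$ a ``clopen cylinder'': in $\{0,1,2\}^{\mathbb{N}}$ this set is not open (every neighbourhood of $w2p$ contains sequences with further $2$s), but its trace on $\dom(\delta_{\mathrm{new}})$ coincides with that of the cylinder $[w2]$, which is all your argument actually uses.
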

 
\begin{proposition}
The space $\frac{\mathbf{X}}{\mathbf{Y}}$ is scattered iff both $\mathbf{X}$ and $\mathbf{Y}$ are; and then furthermore: \[\cbr(\frac{\mathbf{X}}{\mathbf{Y}}) = \cbr(\mathbf{X}) + \cbr(\mathbf{Y})\]
\end{proposition}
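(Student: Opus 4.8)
The plan is to compute the Cantor--Bendixson derivative of $\frac{\mathbf{X}}{\mathbf{Y}}$ directly from the description of its open sets recorded in the preceding observation, and then to iterate transfinitely. The starting point is to identify the isolated points. From $\mathcal{O}(\frac{\mathbf{X}}{\mathbf{Y}})$ one reads off that $x \mapsto \frac{x}{\cdot}$ embeds $\mathbf{X}$ as an \emph{open} subspace and $y \mapsto \frac{\cdot}{y}$ embeds $\mathbf{Y}$ as a \emph{closed} subspace. Consequently $\frac{x}{\cdot}$ is isolated in $\frac{\mathbf{X}}{\mathbf{Y}}$ exactly when $x$ is isolated in $\mathbf{X}$, whereas every open set containing a point $\frac{\cdot}{y}$ has the form $\frac{X}{\cdot}\cup\frac{\cdot}{V}$ and so contains all of $\frac{X}{\cdot}$; hence, provided $X \neq \emptyset$, no point $\frac{\cdot}{y}$ is isolated.

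Next I would verify the single-step formula $\left(\frac{\mathbf{X}}{\mathbf{Y}}\right)' = \frac{\mathbf{X}'}{\mathbf{Y}}$ whenever $X \neq \emptyset$. By the previous paragraph the derivative deletes precisely the points $\frac{x}{\cdot}$ with $x$ isolated, leaving the underlying set $\frac{X'}{\cdot}\cup\frac{\cdot}{Y}$, where $X'$ is the carrier of $\mathbf{X}'$. A routine check against the observation shows that the subspace topology this set inherits coincides with the layering topology of $\frac{\mathbf{X}'}{\mathbf{Y}}$: intersecting $\frac{U}{\cdot}$ with it yields $\frac{U\cap X'}{\cdot}$, and intersecting $\frac{X}{\cdot}\cup\frac{\cdot}{V}$ yields $\frac{X'}{\cdot}\cup\frac{\cdot}{V}$. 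When instead $X=\emptyset$ the observation collapses to $\frac{\emptyset}{\mathbf{Y}}\cong\mathbf{Y}$, so further differentiation simply differentiates $\mathbf{Y}$.

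I would then push this through the transfinite hierarchy by induction, proving $\left(\frac{\mathbf{X}}{\mathbf{Y}}\right)^{(\alpha)} = \frac{\mathbf{X}^{(\alpha)}}{\mathbf{Y}}$ for every $\alpha$ with $\mathbf{X}^{(\alpha)}\neq\emptyset$. The successor step is the one-step formula applied to $\mathbf{X}^{(\alpha)}$; the limit step uses that the CB derivative at a limit is the intersection of the earlier ones, so the surviving carrier is $\bigcap_{\alpha<\lambda}(X^{(\alpha)}\sqcup Y) = X^{(\lambda)}\sqcup Y$, again carrying the layering topology. Writing $\beta = \cbr(\mathbf{X})$, at stage $\beta$ the top copy is exhausted and $\left(\frac{\mathbf{X}}{\mathbf{Y}}\right)^{(\beta)} = \frac{\emptyset}{\mathbf{Y}}\cong\mathbf{Y}$. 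Using the standard identity $Z^{(\beta+\gamma)} = \left(Z^{(\beta)}\right)^{(\gamma)}$ I obtain $\left(\frac{\mathbf{X}}{\mathbf{Y}}\right)^{(\beta+\gamma)} = \mathbf{Y}^{(\gamma)}$, whose least vanishing index is $\gamma = \cbr(\mathbf{Y})$. Hence the least ordinal annihilating $\frac{\mathbf{X}}{\mathbf{Y}}$ is $\cbr(\mathbf{X})+\cbr(\mathbf{Y})$, which simultaneously yields the rank formula and the fact that $\frac{\mathbf{X}}{\mathbf{Y}}$ is scattered when both factors are. The converse direction of the equivalence is immediate, since $\mathbf{X}$ and $\mathbf{Y}$ embed as subspaces of $\frac{\mathbf{X}}{\mathbf{Y}}$ and scatteredness passes to subspaces.

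The routine but most error-prone step will be the bookkeeping at the transition stage $\beta$ and at limits: I must keep the induction hypothesis ``$\mathbf{X}^{(\alpha)}\neq\emptyset$'' correctly aligned, and distinguish whether $\beta$ is a successor (where $\frac{\emptyset}{\mathbf{Y}}$ appears one step beyond $\mathbf{X}^{(\gamma)}$ with $\beta=\gamma+1$) or a limit (where $\frac{\emptyset}{\mathbf{Y}}$ arises as the intersection), so that the ordinal sum $\cbr(\mathbf{X})+\cbr(\mathbf{Y})$ comes out exactly right in both cases. As a sanity check, the identity $\mathbb{S}\cong\frac{\mathbf{1}}{\mathbf{1}}$ gives $\cbr(\mathbb{S}) = 1+1 = 2$, matching the two-step derivative of $\mathbb{S}$.
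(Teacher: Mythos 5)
Your proof is correct and follows essentially the same route as the paper's: identify the isolated points from the description of $\mathcal{O}(\frac{\mathbf{X}}{\mathbf{Y}})$, establish the one-step formula $\left(\frac{\mathbf{X}}{\mathbf{Y}}\right)' = \frac{\mathbf{X}'}{\mathbf{Y}}$ for $\mathbf{X} \neq \emptyset$, and combine it with $\frac{\emptyset}{\mathbf{Y}} \cong \mathbf{Y}$ to get the rank formula. The only difference is that you spell out the transfinite induction (including limit stages and the ordinal-sum bookkeeping) that the paper compresses into ``the claim follows.''
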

\begin{proof}
We find that $\frac{x}{\cdot} \in \frac{\mathbf{X}}{\mathbf{Y}}$ is isolated iff $x \in \mathbf{X}$ is isolated. On the other hand, $\frac{\cdot}{y} \in \frac{\mathbf{X}}{\mathbf{Y}}$ is isolated only if $\mathbf{X}$ is the empty space and $y \in \mathbf{Y}$ is isolated. If $\mathbf{Z}'$ denotes the Cantor-Bendixson derivative, it follows that for $\mathbf{X} \neq \emptyset$, we have $\left (\frac{\mathbf{X}}{\mathbf{Y}} \right )' = \frac{\mathbf{X}'}{\mathbf{Y}}$. Together with the observation that $\frac{\emptyset}{\mathbf{Y}} \cong \mathbf{Y}$, the claim follows.
\end{proof}

\subsubsection*{Some represented spaces of particular interest}

\begin{definition}
The represented space $(\omone)$ has the underlying set $\mathbb{N} \cup \{\omega\}$ and comes with the representation $\delta_{\omone} : \Cantor \to (\omone)$ where $\delta_{\omone}(0^n1p) = n$ for any $p \in \Cantor$ and $\delta_{\omone}(0^\omega) = \omega$.
\end{definition}
For added clarity, we note that the represented space $\omone$ as given above is \emph{not} multiretraceable, and hence the obvious embedding $\mathbb{N}\to\omone$ is not a completion in the sense of Definition \ref{def:completion}: indeed, consider for instance the partial map $f:\subseteq 2^{\mathbb{N}}\to \omone$ such that on input $p$ it outputs $0$ if the first $1$ in $p$ occurs in an even position and $1$ if it occurs in an odd position. 

\subsection{Weihrauch reducibility}
We use $f \leqW g$ to denote that $f$ is Weihrauch reducible to $g$. By $f \leqW^* g$ we denote that $f$ is continuous Weihrauch reducible to $g$, meaning that the reduction witness are merely require to be continuous rather than computable functions. We can also view $f \leqW^* g$ to mean that $f$ is Weihrauch reducible to $g$ relative to some oracle, as continuity is just computability relative to an arbitrary oracle.

\subsubsection*{The first-order part of a Weihrauch degree}
As mentioned in the introduction, the first-order part of a Weihrauch degree $f$, denoted by $^1f$, was proposed by Dzafahrov, Solomon and Yokoyama \cite{solomon}. It is defined as: $$^1(f) := \max_{\leqW} \{g : \subseteq \Baire \mto \mathbb{N} \mid g \leqW f\}$$
As the Weihrauch lattice is not complete \cite{paulykojiro}, an explicit construction is needed to show that the first-order part indeed exists. This is straight-forward, however. The first-order part was thoroughly studied in \cite{soldavalenti}, and has turned out to be a convenient tool to obtain separations between Weihrauch degrees (e.g.~\cite{pauly-valenti,damir-reed-manlio,paulycipriani1}).

\section{Discontinuity of multivalued functions on $\omone$}
\label{sec:domainanalysis}

One component of the our main theorem is provided by an analysis of the possible Weihrauch degrees for multivalued functions with domain $\omone$. We start by providing a multivalued function with domain $\omone$ which is equivalent to $\accn$.

\begin{definition}
  $\seqaccn: \omone \mto \Nb$ is the problem defined as $\seqaccn(\la n,m\ra)= \Nb\setminus\{n\}$, and $\seqaccn(\omega)=\Nb$.
\end{definition}

\begin{lemma}\label{sec:accn-seqaccn}
  $\accn\equivsW\seqaccn$
\end{lemma}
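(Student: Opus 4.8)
The plan is to establish the two strong reductions $\seqaccn \leqsW \accn$ and $\accn \leqsW \seqaccn$ separately, in each case exhibiting a computable forward functional on names together with a backward functional. I expect the backward functional to be the identity on $\Nb$ in both directions, since a correct output of either problem is literally a natural number lying in the required subset of $\Nb$, and the two problems will be arranged so that corresponding instances have identical answer sets. Recall that an $\accn$-instance $A \in \{\Nb\} \cup \{\Nb \setminus \{n\} \mid n \in \Nb\}$ is presented by an enumeration of its (at most one) forbidden element, so that a name reveals nothing as long as $A = \Nb$ is still possible and, if ever, reveals a single value $n$ which then stays fixed; an $\seqaccn$-instance is presented by a name in $\omone$, i.e.\ a word of the form $0^k 1 p$ coding $k$ or the word $0^\omega$ coding $\omega$.

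For $\seqaccn \leqsW \accn$ the forward functional reads a name of an $\omone$-point and emits ``no forbidden element'' while it is still reading $0$'s; if it encounters the $1$ at position $k$, it decodes $k = \la n, m\ra$ and from then on enumerates $n$ as the unique forbidden element. Thus the name $0^\omega$ is sent to the $\accn$-instance $\Nb$, and a name coding $k = \la n, m\ra$ is sent to $\Nb \setminus \{n\} = \seqaccn(k)$. In either case the answer sets of the two instances coincide, so the identity backward functional works.

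The interesting direction is $\accn \leqsW \seqaccn$, and here the main obstacle is one of timing. The forward functional reads the $\accn$-name and, as long as no forbidden element has appeared, keeps outputting $0$'s, building a prefix of $0^\omega$; if $A = \Nb$ this produces exactly the $\seqaccn$-instance $\omega$, whose answer set $\Nb$ matches. The difficulty is that the forbidden element $n$ may be revealed only at a late stage $s$, after many $0$'s have already been committed, whereas the $\omone$-name $0^k 1 p$ forces the coded number to equal the number of leading $0$'s, and we cannot retroactively make this number small. This is exactly where the pairing in the definition of $\seqaccn$ is used: for fixed $n$ the set $\{\la n, m \ra \mid m \in \Nb\}$ is infinite, so upon seeing $n$ we may choose some $k = \la n, m\ra$ larger than the number of $0$'s output so far, continue padding with $0$'s up to position $k$, and place the $1$ there. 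The resulting instance is $\seqaccn(k) = \Nb \setminus \{n\} = A$, whose answers are again precisely the admissible $\accn$-answers, so the identity backward functional completes the reduction.
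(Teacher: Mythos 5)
Your proof is correct and follows essentially the same route as the paper: both directions are witnessed by inner functionals that translate between the enumeration of the forbidden element and the $\omone$-name, with the identity as outer functional, using the pairing in $\seqaccn$ to absorb the delay before the forbidden element appears. In fact you spell out the timing issue (choosing $\la n,m\ra$ larger than the number of $0$'s already emitted) more explicitly than the paper, which handles it by silently relying on $\la n,s\ra\geq s$ for the stage $s$.
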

\begin{proof}
  We start showing that $\accn\leqsW\seqaccn$. An instance of $\accn$ is an enumeration $g$ of an open set $U$ of $\Nb$ that contains at most one point. We define the inner functional $H$ as follows: at every step $s$, if nothing was enumerated in $U$, we let $H(g)_{\leq s}=0^s$. If instead at stage $s$ $g$ enumerated the number $n$ in $U$, we set $H(g)={0^{\la n,s\ra}}\conc 1^\Nb$. After applying $\seqaccn$, it is clear that we can take the identity as outer functional.

  Next we show that $\seqaccn\leqsW\accn$. Again, we only need to define the inner functional $H$, and we will do it as follows: given a certain sequence $p\in 2^\Nb$, we enumerate nothing in the open set $U$ unless, at step $s$, we see that $p={0^s}\conc 1\conc q$, in which case we enumerate $\pi_1(s)$ in $U$.
\end{proof}

\begin{lemma}\label{sec:equiv-disc-accn}
  The following are equivalent for a partial multivalued function $f:\subseteq \Baire \mto \Baire$:
  \begin{enumerate}
  \item there exists a converging sequence $( a_n)_{n\in\omega}\to a$ such that $a_n\in \dom f$ for every $n\in\omega$, $a\in \dom f$, and $f|_{\overline{\{a_n:n\in\Nb\}}}$ is discontinuous.
  \item $\accn\leqW^* f$
  \end{enumerate}
\end{lemma}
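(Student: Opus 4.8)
The plan is to prove both implications after replacing $\accn$ by $\seqaccn$, which is legitimate by Lemma~\ref{sec:accn-seqaccn}: since $\accn\equivsW\seqaccn$ we have $\accn\equivW\seqaccn$, so $\accn\leqW^* f$ if and only if $\seqaccn\leqW^* f$. The structural feature I will exploit throughout is that the canonical names $0^n\conc 1\conc 0^\omega$ of $n\in\Nb\subseteq\omone$ converge to the name $0^\omega$ of $\omega$; thus $\omone$ is literally a convergent sequence with its limit, and $\seqaccn$ fails to have a continuous realizer exactly because a realizer forced to commit to an output after reading a prefix $0^N$ of $0^\omega$ is defeated by the instance $\la v,m\ra$ for large $m$.

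For the implication $2.\Rightarrow1.$ I would fix continuous witnesses $H,K$ of $\seqaccn\leqW^* f$ and set $a_n:=H(0^n\conc 1\conc 0^\omega)$ and $a:=H(0^\omega)$. Continuity of $H$ gives $a_n\to a$, and all these points lie in $\dom(f)$ because $H$ sends instances to instances. It then remains to see that $f|_{\overline{\{a_n\mid n\in\Nb\}}}$ is discontinuous, the closure being just $\{a_n\mid n\in\Nb\}\cup\{a\}$. If it had a continuous realizer $G$, then composing the canonical-name section $\sigma\colon\omone\to\Baire$ (with $\sigma(n)=0^n\conc1\conc0^\omega$ and $\sigma(\omega)=0^\omega$) with $H$, then $G$, then $K$ would produce a continuous map $c\colon\omone\to\Nb$ with $c(x)\in\seqaccn(x)$; here I use that $H\sigma(n)=a_n$ and $H\sigma(\omega)=a$ both lie in the closure, so that $G$ applies. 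But a continuous $c\colon\omone\to\Nb$ is eventually equal to a constant $v$, whence $c(\la v,m\ra)=v\notin\seqaccn(\la v,m\ra)$ for all sufficiently large $m$, a contradiction. Hence no such $G$ exists.

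For the implication $1.\Rightarrow2.$ I would first observe that $f|_{\overline{\{a_n\}}}\leqsW f$, so it suffices to reduce $\seqaccn$ to the restriction $h:=f|_{\overline{\{a_n\}}}$, whose domain is a convergent sequence with limit $a$. The guiding dictionary is that the continuous realizers of such an $h$ correspond exactly to selections $y_n\in f(a_n)$, $y\in f(a)$ with $y_n\to y$ (continuity can only fail at the single limit point $a$); so the hypothesis that $h$ is discontinuous says precisely that \emph{no convergent selection exists}. I would then construct continuous $H',K'$ witnessing $\seqaccn\leqW^* h$ as follows. The inner reduction $H'$ emits longer and longer prefixes of $a$ while it still reads only zeros, so that $H'(0^\omega)=a$; as soon as the instance reveals a number $k$, it switches to a point $a_{\phi(k)}$ chosen close enough to $a$ to remain consistent with the prefix emitted so far (possible, since cofinitely many $a_n$ agree with $a$ to any prescribed depth). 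The outer reduction $K'$ reads both the instance and the $f$-solution $w$: once a number $k$ has appeared it outputs $\pi_1(k)+1$, and while no number has appeared it commits to a value $v$ exactly when it has observed that $w$ has escaped every cylinder still compatible with the possible forbidden values, which on the $0^\omega$-branch must occur in finite time.

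The main obstacle is exactly the correctness and finiteness of this commitment. The failure of a convergent selection is a statement about the \emph{whole} sequence and, once unwound, yields for each individual limit-solution $y\in f(a)$ only a depth $L_y$ and an \emph{infinite} (not cofinite) set of indices whose $f$-values avoid the depth-$L_y$ cylinder around $y$; these data are neither uniform in $y$ nor finitely observable by the continuous map $K'$, and a naive outer reduction can be trapped into never committing on $0^\omega$. I expect the crux to be a careful choice of the assignment $\phi$ — partitioning the sequence into infinitely many subsequences, one per candidate forbidden value, each still accumulating at $a$ — together with a compactness (König-style) argument to the effect that if, for the incoming solution $w$, every one of these subsequences admitted a selection converging to $w$, then these could be amalgamated into a genuine convergent selection for the full sequence, contradicting discontinuity. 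Controlling the exceptional indices so that the per-subsequence convergences actually combine is the technical heart of this direction.
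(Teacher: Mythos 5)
Your direction $2 \Rightarrow 1$ is correct and is essentially the paper's own argument, just made more explicit: you pass to $\seqaccn$ via Lemma \ref{sec:accn-seqaccn}, take the image of the canonical names of $\omone$ under the inner functional to get the sequence, and use eventual constancy of continuous maps $\omone \to \Nb$ to conclude that $\seqaccn$ cannot reduce to anything continuous on that closure. No issues there.

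The genuine gap is in $1 \Rightarrow 2$, and it sits exactly where you flag it: the commitment of the outer functional on the $0^\omega$-branch. Your ``no convergent selection exists'' dictionary is the right combinatorial core (it is the contrapositive of the claim the paper proves: for every $p \in f(a)$ there is a finite prefix $p_{\leq k}$ such that $f(a_j) \cap [p_{\leq k}] = \emptyset$ for infinitely many $j$, hence for $a_j$ arbitrarily close to $a$), and your inner functional agrees with the paper's. But you then leave the crux open, and the repair you gesture at (per-value subsequences plus a K\"onig-style amalgamation) is not the missing idea; what is needed is a certification trick tying the inner and outer functionals together. The paper takes as oracles the countable set $\{w_i\}$ of witnessing prefixes of elements of $f(a)$, together with the function $\lambda(n,i)$ giving the least $j$ with $d(a_j,a) < 2^{-n}$ and $f(a_j) \cap [w_i] = \emptyset$ (such $j$ exists precisely because infinitely many, not cofinitely many, indices avoid $[w_i]$ -- this is how that worry of yours is absorbed). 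The inner functional copies $a$ until the instance enumerates some $i$ at stage $n$, and then switches to $a_{\lambda(n,i)}$: the jump target is keyed to the value $i$ that just became forbidden, and its solution set avoids $[w_i]$. This is what makes commitment finitely certifiable. The outer functional searches in parallel, over all $i$, for either ``$i$ is enumerated by the instance'' (then answer $i+1$) or ``the returned solution $q$ extends $w_i$'' (then answer $i$). The second observation is a genuine certificate: had $i$ ever been enumerated, the inner functional would have moved to a point whose solutions avoid $[w_i]$, so $q$ could not extend $w_i$; and the search terminates, since if nothing is ever enumerated then $q \in f(a)$ and, by the claim, $q$ extends some $w_i$. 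Your two stated obstacles thus dissolve: non-uniformity in $y$ is harmless because a continuous reduction may use $\{w_i\}$ and $\lambda$ as oracles outright, and finite observability is supplied by the single prefix $w_i$ of the solution actually received, not by any global property of the sequence.
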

\begin{proof}
  We start by proving $2\imp 1$: by Lemma \ref{sec:accn-seqaccn}, we can suppose that $\seqaccn\leqW^* f$. The image of $\omone$ under the reduction yields the convergent sequence $( a_n)_{n\in\omega}\to a$; and the corresponding restriction $f|_{\overline{\{a_n:n\in\Nb\}}}$ still satisfies $\seqaccn\leqW^* f|_{\overline{\{a_n:n\in\Nb\}}}$, and thus is discontinuous.
  
  Now on to $1\imp 2$: 
  Let $( a_n)_{n\in\omega}\to a$ be a sequence as in the hypotheses. We claim that the following holds:
  \[
    \forall p\in f(a) \ \exists k\in\Nb \ \forall N\in\Nb\ \exists j>N (f(a_j)\cap [p_{\leq k}]=\emptyset).
  \]
  Suppose for a contradiction that this is false. Then, there is $p\in f(a)$ such that for every integer $k$ there is an $N_k$ with the property that for every $a_j$ with $j>N_k$, $f(a_j)\cap [p_{\leq k}]$ is not empty, say that it contains a certain $q_{k,j}$. Then, we can define a realizer $F$ for $f$ that is continuous on $\overline{\{a_n:n\in\Nb\}}$ as follows: for every $i\in\Nb$ let $M_i=\sum_{j<i} N_j+1$, and for every $j\in [M_i,M_{i+1}-1]$ let $F(a_j)=q_{i,j}$ (notice that the $[M_i,M_{i+1}-1]$ partition $\Nb$, hence $F$ is defined on every $a_j$). Moreover, let $F(a)=p$, and let $F(x)\in f(x)$ for every other element $x\in\dom f$. It is immediately verified that this $F$ is a realizer as we wanted it.
  
  Notice that the $p_{\leq k}$ as above form a countable set, and assume we have an enumeration of it, say $( w_i:i\in\Nb ($ (an element might be enumerated several times). Then, we can define the function $\lambda:\Nb\times\Nb\to \Nb$ as $\lambda(n,i)=j$, where $j$ is minimal such that $d(a_j,a)<1/2^n$ and $f(a_j)\cap w_i=\emptyset$ (it is easy to see that such a $j$ exists). We will use the function $\lambda$ and the set $\{w_i:i\in\Nb\}$ as oracles for the functionals witnessing that $\accn\leqW^* f$.

  Let $g\in\dom \accn$ be an enumeration of an open set $U$ that contains at most one point. We define the inner functional $H$ in steps as follows: at step $n$, we check whether a number $i$ is enumerated in $U$ by $g$. If this is the case, the construction stops and we output $H(g)=a_{\lambda(n,i)}$, whereas if this is not the case we set $H(g)(n)=a(n)$. Our assumption that $a_j\to a$ guarantees that the construction works.

  Finally, we define the outer functional $K$ on input $(q,g)$ (where $q\in f(H(g))$) as follows: for every $i\in\Nb$, we check simultaneously whether $q$ extends $w_i$ and whether $i$ is enumerated in $U$ by $g$. Notice that at least one of these searches gives a positive result, since if no $i$ is enumerated in $U$ by $g$, then $H(g)=a$. Suppose then that the search for a certain $i$ gave positive result: then, if the search was successful because we found that $i$ is enumerated in $U$ by $g$, we set $K(q,g)=i+1$, which is then clearly in $\accn(g)$. If instead the search was successful because $q$ extends $w_i$, we set $K(q,g)=i$: by the fact that $q$ extends $w_i$, we know that at no point, while determining $H(g)$, it was found that $i$ is enumerated in $U$, and hence $i\in\accn(g)$.
\end{proof}

\section{Games for Weihrauch reducibility}
\label{sec:games}

In this section we identify the least degree of discontinuity a multivalued function with a given codomain can attain. We begin by introducing a suitable representative of this degree.

\begin{definition}
Let $\neqX{\mathbf{X}} : \overline{\mathbf{X}} \mto \mathbf{X}$ by defined by $y \in \neqX{\mathbf{X}}(x)$ iff $x \neq y$.
\end{definition}

\begin{proposition}
\label{prop:eq-dom}
Let $\iota_1 : \mathbf{X} \hookrightarrow \mathbf{Y}_1$, $\iota_2 : \mathbf{X} \hookrightarrow \mathbf{Y}_2$ be two completions of $\mathbf{X}$, and let $\neqX{\mathbf{X}}^1$ and $\neqX{\mathbf{X}}^2$ be the maps defined using those. Then $\neqX{\mathbf{X}}^1 \equivsW \neqX{\mathbf{X}}^2$.
\begin{proof}
Consider the computable multivalued function $T : \mathbf{Y}_1 \mto \mathbf{Y}_2$ provided by Proposition \ref{prop:completions}. Then $\neqX{\mathbf{X}}^2 \circ T$ tightens $\neqX{\mathbf{X}}^1$, thus $\neqX{\mathbf{X}}^1 \leqW \neqX{\mathbf{X}}^2$. By symmetry, the claim follows.
\end{proof}
\end{proposition}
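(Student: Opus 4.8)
The plan is to establish the two strong reductions $\neqX{\mathbf{X}}^1 \leqsW \neqX{\mathbf{X}}^2$ and $\neqX{\mathbf{X}}^2 \leqsW \neqX{\mathbf{X}}^1$ separately; since the hypotheses are entirely symmetric in the two completions, it suffices to prove the first and then swap the roles of $\iota_1$ and $\iota_2$. The only external ingredient I expect to need is the transfer map between completions supplied by Proposition \ref{prop:completions}.

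First I would apply Proposition \ref{prop:completions} to obtain a computable multivalued $T : \mathbf{Y}_1 \mto \mathbf{Y}_2$ with $\iota_2 = T \circ \iota_1$, take a realizer of $T$ as the inner reduction, and take the identity on $\mathbf{X}$ as the outer reduction. Concretely: on a name for $x \in \mathbf{Y}_1$ the inner functional computes a name for some point of $T(x) \subseteq \mathbf{Y}_2$; we then solve $\neqX{\mathbf{X}}^2$ there, obtaining some $y \in \mathbf{X}$; and we return $y$ unchanged. Because the outer functional is the identity, the reduction is automatically strong, which is what yields $\equivsW$ rather than merely $\equivW$.

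The crux is verifying that any $y$ produced this way already satisfies $y \in \neqX{\mathbf{X}}^1(x)$, that is, $\iota_1(y) \neq x$. I would split according to whether $x$ lies in the image of $\iota_1$. If $x = \iota_1(z)$ for the (necessarily unique) $z \in \mathbf{X}$, then the defining identity $\iota_2 = T \circ \iota_1$ forces $T(\iota_1(z)) = \{\iota_2(z)\}$, so the inner step lands exactly on $\iota_2(z)$; hence $\neqX{\mathbf{X}}^2(\iota_2(z))$ returns some $y$ with $\iota_2(y) \neq \iota_2(z)$, so $y \neq z$ by injectivity of $\iota_2$, and therefore $\iota_1(y) \neq \iota_1(z) = x$, as required. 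If instead $x$ is not in the image of $\iota_1$, then $\iota_1(y) \neq x$ holds for every $y \in \mathbf{X}$, so $\neqX{\mathbf{X}}^1(x) = \mathbf{X}$ and any output whatsoever is valid.

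The step I expect to demand the most care is controlling the multivaluedness of $T$ off the image of $\iota_1$, where a realizer may behave unpredictably. The observation that dissolves this is precisely the case distinction above: wherever $T$ is genuinely multivalued the target problem $\neqX{\mathbf{X}}^1$ imposes no constraint at all, while wherever the constraint is nontrivial the equation $\iota_2 = T \circ \iota_1$ pins $T$ down to be single-valued and compatible with $\iota_2$. This is exactly what it means for $\neqX{\mathbf{X}}^2 \circ T$ to \emph{tighten} $\neqX{\mathbf{X}}^1$. Once this is in place, the symmetric argument yields $\neqX{\mathbf{X}}^2 \leqsW \neqX{\mathbf{X}}^1$ and the proposition follows.
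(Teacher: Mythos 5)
Your proposal is correct and follows essentially the same route as the paper: invoke Proposition \ref{prop:completions} to get the computable $T : \mathbf{Y}_1 \mto \mathbf{Y}_2$ with $\iota_2 = T \circ \iota_1$, observe that $\neqX{\mathbf{X}}^2 \circ T$ tightens $\neqX{\mathbf{X}}^1$, and conclude by symmetry. The only difference is that you spell out the tightening verification (the case split on whether the input lies in the image of $\iota_1$) and the reason the reduction is strong, both of which the paper leaves implicit.
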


In the case that $\mathbf{X}$ is empty or consists of a single point the above definition does not entirely make sense. It will be convenient to assume that $\neqX{\emptyset} \equivW 0$, where $0$ is the no-where defined function, and that for every singleton space $\mathbf{1}$ it holds that $\neqX{\mathbf{1}} \equivW \infty$ where $\infty$ is the (artificial) top element in the Weihrauch degrees (see the discussion in \cite[Section 2.1]{paulybrattka4}).

For discrete spaces $\mathbf{X}$, we find that the degree of $\neqX{\mathbf{X}}$ is a familiar principle:

\begin{observation}
\label{obs:neqn}
$\neqX{\Nb} \equivsW \accn$ and $\neqX{\mathbf{n}} \equivsW \mathsf{ACC}_{\mathbf{n}}$.
\end{observation}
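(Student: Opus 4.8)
The plan is to establish both reductions directly, using the identity map as the outer functional in each case, so that we obtain \emph{strong} reducibility and hence the stated $\equivsW$. By Proposition \ref{prop:eq-dom} we are free to fix any completion $\iota : \Nb \hookrightarrow \overline{\Nb}$. It is worth recording what the two problems ask for: an instance of $\accn$ enumerates a set with at most one forbidden element $n \in \Nb$, and a correct answer is any $m \neq n$ (or any $m$ when no element is forbidden); an instance of $\neqX{\Nb}$ is a name of a point $x \in \overline{\Nb}$, and a correct answer is any $m \in \Nb$ with $\iota(m) \neq x$. The key conceptual point is that these two specifications match: a name of $x \in \overline{\Nb}$ either commits to some $\iota(n)$, in which case $n$ is the unique ``forbidden'' value, or converges to an extra point, in which case nothing is forbidden.

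For $\accn \leqsW \neqX{\Nb}$, consider the partial computable map $\psi : \subseteq \Cantor \to \Nb$ sending an $\accn$-instance to its forbidden element whenever one exists. Then $\iota \circ \psi : \subseteq \Cantor \to \overline{\Nb}$ is partial computable, so by multiretraceability of $\overline{\Nb}$ it extends to a total computable $\Psi : \Cantor \to \overline{\Nb}$, which I take as the inner functional. If the instance $g$ has forbidden element $n$, then $\Psi(g) = \iota(n)$ and any answer $m$ of $\neqX{\Nb}$ satisfies $\iota(m) \neq \iota(n)$, i.e.\ $m \neq n$; if $g$ forbids nothing, then every $m$ is admissible for $\accn(g)$ regardless of what $\Psi(g)$ is. In either case any output of $\neqX{\Nb}$ on $\Psi(g)$ is already correct for $\accn$ on $g$, so the identity serves as the outer functional.

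For $\neqX{\Nb} \leqsW \accn$, I use that $\iota$ is a computable embedding, so $\iota^{-1} : \subseteq \overline{\Nb} \to \Nb$ admits a realizer which, on a name of any $x \in \iota[\Nb]$, commits to the unique $m$ with $\iota(m) = x$. The inner functional $H$ runs this realizer on the input name of $x$ and enumerates into the forbidden set the single value it commits to (if any); since a name of a point of the discrete space $\Nb$ fixes at most one value, $H(p)$ is a legitimate $\accn$-instance. When $x = \iota(n)$ the forbidden element is exactly $n$, so $\accn$ returns some $m \neq n$, giving $\iota(m) \neq x$; when $x \notin \iota[\Nb]$ either nothing is enumerated or some spurious value is, but then $\iota(m') \neq x$ holds for \emph{every} $m'$ anyway. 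Thus every answer of $\accn$ on $H(p)$ is a correct $\neqX{\Nb}$-answer on $x$, and again the identity is the outer functional.

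The finite case $\neqX{\mathbf{n}} \equivsW \mathsf{ACC}_{\mathbf{n}}$ is handled verbatim, reading $\{0, \dots, n-1\}$ for $\Nb$ throughout. I expect the only genuinely delicate point to be the verification on inputs lying outside $\iota[\Nb]$: there one must confirm that the unconstrained behaviour of the realizers on non-names, together with the freedom of $\neqX{\Nb}$ to return anything, can never produce an incorrect answer. This is bookkeeping rather than a real obstacle, precisely because on an extra point \emph{no} value is forbidden, so any returned natural number is vacuously admissible.
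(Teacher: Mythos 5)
Your proposal is correct, and since the paper states this as an observation with no proof at all, it fills the gap in exactly the intended way: a point of $\overline{\Nb}$ is precisely a "value that may or may not ever be committed to," which matches an $\accn$-instance enumerating at most one forbidden number, with multiretraceability (for $\accn \leqsW \neqX{\Nb}$) and the computability of $\iota^{-1}$ (for $\neqX{\Nb} \leqsW \accn$) handling the two directions and the identity as outer functional in both. The only caveat worth noting is that the finite case implicitly assumes $\mathbf{n}$ has at least two points, since the paper's convention $\neqX{\mathbf{1}} \equivW \infty$ breaks the equivalence for $n=1$; this is an edge case the paper itself glosses over.
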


We obtain our main result for this section by analysing a generalization of Wadge games for multivalued function between represented spaces. We are primarily following \cite{pauly-nobrega} for this, but similar ideas are present e.g.~in \cite{pequignot, discontinuityproblem}.

\begin{definition}
The Wadge game for a multivalued function $f : \mathbf{X} \mto \mathbf{Y}$ is played by two players, $\mathrm{I}$ and $\mathrm{II}$ taking turns. Player $\mathrm{I}$ always plays a natural number, Player $\mathrm{II}$ can play a natural number or skip her turn. The game is zero-sum, and winning is determined by the following rules, with earlier rules taking precedence over later ones.
\begin{enumerate}
\item If Player $\mathrm{I}$'s moves do not form a name for an element $x \in \mathbf{X}$, Player $\mathrm{I}$ loses the game.
\item If Player $\mathrm{II}$ does not play a number infinitely often, or if the numbers she plays does not form a name for some $y \in \mathbf{Y}$, Player $\mathrm{II}$ loses.
\item Otherwise, Player $\mathrm{II}$ wins iff $y \in f(x)$.
\end{enumerate}
\end{definition}

The motivation behind the definition of the Wadge game is the following rather straight-forward observation:

\begin{observation}
\label{obs:player2wins}
Player $\mathrm{II}$ has a winning strategy for the Wadge game for $f : \mathbf{X} \mto \mathbf{Y}$ iff $f$ is continuous.
\end{observation}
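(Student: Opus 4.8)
The plan is to prove both directions of the equivalence by exhibiting explicit strategies, relating continuity of $f$ to the existence of a winning strategy for Player $\mathrm{II}$ in the Wadge game.

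First I would prove the direction: if $f$ is continuous, then Player $\mathrm{II}$ has a winning strategy. Since $f$ is continuous, it has a continuous realizer $F : \subseteq \Baire \to \Baire$, i.e.~a continuous function such that $F(p) \in f(\delta_{\mathbf{X}}(p))$ whenever $p$ is a name for some $x \in \mathbf{X}$. Player $\mathrm{II}$'s strategy is to read Player $\mathrm{I}$'s moves as a name $p$ being built for an element of $\mathbf{X}$, and to output the bits of $F(p)$ as they become determined. The key point is the standard characterization of continuous functionals on Baire space: the value $F(p)$ depends continuously on $p$, so each output bit $F(p)(n)$ is decided after reading only finitely many input bits. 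Player $\mathrm{II}$ plays these bits, skipping her turn while waiting for enough of $p$ to be revealed. Provided Player $\mathrm{I}$ actually produces a name for some $x \in \mathbf{X}$ (otherwise Player $\mathrm{I}$ loses by Rule 1), Player $\mathrm{II}$'s output is a name for $F(p) \in f(x)$, so she wins by Rule 3. The only subtlety is confirming she plays a number infinitely often: since $F(p)$ is an infinite sequence and each bit is eventually emitted, she does, so Rule 2 is satisfied.

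Next I would prove the converse: if Player $\mathrm{II}$ has a winning strategy $\sigma$, then $f$ is continuous. Given such a $\sigma$, I would define a realizer $F$ for $f$ by simulating the game: on input $p \in \dom(\delta_{\mathbf{X}})$, let Player $\mathrm{I}$ play the moves of $p$ and let $F(p)$ be the sequence of numbers produced by $\sigma$ in response. Because $\sigma$ is a strategy for Player $\mathrm{II}$, her move at each stage depends only on the finite history of the play so far, hence only on a finite prefix of $p$; this makes $F$ continuous. Since $p$ names an element $x \in \mathbf{X}$, Rule 1 does not apply, so $\sigma$ winning forces Player $\mathrm{II}$'s output to name some $y \in \mathbf{Y}$ (Rule 2) with $y \in f(x)$ (Rule 3). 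Thus $F$ is a continuous realizer of $f$, and $f$ is continuous.

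The \emph{main obstacle} is bookkeeping the interleaving of the two players' moves and the skip option, to ensure that the correspondence between continuous realizers and strategies is exact. In particular one must check that Player $\mathrm{II}$'s ability to skip does not let her avoid producing an infinite output when $f$ is continuous (handled by Rule 2 forcing infinitely many numbers), and conversely that a winning strategy genuinely yields a \emph{total-on-domain} realizer rather than one that stalls. Both are routine once the game dynamics are set up carefully, and the precedence ordering of the rules makes the equivalence clean: the decisive observation is simply that a Player $\mathrm{II}$ strategy is, by definition, a continuous process reading a name for the input and writing a name for an output, which is exactly a continuous realizer.
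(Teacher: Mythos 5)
Your proof is correct: the paper states this as a ``rather straight-forward observation'' and gives no proof at all, and your argument---identifying a winning strategy for Player $\mathrm{II}$ with a continuous realizer of $f$, in both directions---is exactly the intended one (the same identification is used, for Player $\mathrm{I}$'s strategies, in the paper's proof of Theorem \ref{theo:neqminimal}). The bookkeeping details you flag (monotone approximation of the realizer's output, skips while waiting, and Rule 1 covering the case where Player $\mathrm{I}$ fails to name a point) are handled appropriately, so nothing is missing.
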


If winning strategies for Player $\mathrm{II}$ witness continuity of the multivalued function $f$, it is very natural to ask what a winning strategy for Player $\mathrm{I}$ would do. If the game is determined, such an analysis gives rise to a dichotomy as follows:

\begin{theorem}[$\mathrm{ZF} + \mathrm{DC} + \mathrm{AD}$]
\label{theo:neqminimal}
A multivalued function $f : \mathbf{Z} \mto \mathbf{X}$ is either continuous or satisfies $\neqX{\mathbf{X}} \leqW^* f$.
\end{theorem}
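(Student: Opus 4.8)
The plan is to run the Wadge game for $f : \mathbf{Z} \mto \mathbf{X}$ and let determinacy decide the winner. Since both players make moves from a countable set (Player $\mathrm{I}$ a natural number, Player $\mathrm{II}$ a natural number or a skip), the game can be recoded as an ordinary game on $\Baire$, and $\mathrm{AD}$ guarantees that it is determined; thus exactly one player has a winning strategy. By Observation \ref{obs:player2wins}, a winning strategy for Player $\mathrm{II}$ is precisely a witness that $f$ is continuous, which is the first alternative. So it suffices to show that a winning strategy for Player $\mathrm{I}$ produces a continuous reduction $\neqX{\mathbf{X}} \leqW^* f$.

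\textbf{The reduction from a winning strategy for Player $\mathrm{I}$.} Fix a winning strategy $\sigma$ for Player $\mathrm{I}$. The point is that $\sigma$ reads off the moves of Player $\mathrm{II}$ and answers with the moves of Player $\mathrm{I}$, which, since $\mathrm{I}$ must not lose by rule $1$, always form a name for some $z \in \dom f$ regardless of how $\mathrm{II}$ plays. I would first arrange, invoking Proposition \ref{prop:completions} (and Proposition \ref{prop:eq-dom}, which lets me work with whichever completion is convenient), that the names of points of $\overline{\mathbf{X}}$ are exactly the possible runs of Player $\mathrm{II}$: a run that validly names some $y \in \mathbf{X}$ corresponds to $\iota(y)$, and a run that never settles to a name corresponds to one of the extra completion points. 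The inner reduction $H$ then takes a name $p$ for an input $x^* \in \overline{\mathbf{X}}$, interprets it as a run of Player $\mathrm{II}$, has Player $\mathrm{I}$ respond according to $\sigma$, and outputs the resulting name of $z = z(p) \in \dom f$ as an instance of $f$. Given any solution $w \in f(z)$ supplied by the $f$-oracle, the outer reduction simply returns $w$ as the answer to $\neqX{\mathbf{X}}$.

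\textbf{Correctness and the main obstacle.} To verify $w \in \neqX{\mathbf{X}}(x^*)$, i.e.\ $w \neq x^*$, I would split on whether $x^*$ lies in the image of $\mathbf{X}$. If $x^* \in \mathbf{X}$, the run of $\mathrm{II}$ coming from $p$ is a valid name for $x^*$, so rule $3$ governs the play; as $\sigma$ is winning for $\mathrm{I}$ this forces $x^* \notin f(z)$, and hence $w \in f(z)$ gives $w \neq x^*$. If instead $x^*$ is a genuine completion point, then every element of $\mathbf{X}$ already differs from $x^*$, and such a $w$ exists because $z \in \dom f$. Both $H$ and the trivial outer reduction are continuous in $p$ relative to the oracle $\sigma$ (a strategy supplied by $\mathrm{AD}$, which need not be computable), and this is exactly what $\leqW^*$ requires. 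I expect the main obstacle to be the bookkeeping of the first step: making the identification of $\overline{\mathbf{X}}$-names with runs of Player $\mathrm{II}$ precise and compatible with multiretraceability, so that genuine $\mathbf{X}$-names and degenerate plays are sent to the right points of the chosen completion. Once that correspondence is in place, the determinacy dichotomy, the continuity of the strategy-driven reduction, and the case analysis above are routine.
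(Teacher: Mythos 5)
Your proof is correct and follows essentially the same route as the paper: determinacy of the (recoded) Wadge game plus Observation \ref{obs:player2wins} handles the continuity alternative, and a winning strategy for Player $\mathrm{I}$ is converted into the inner reduction witness by taking the runs of Player $\mathrm{II}$ as the names of a tailor-made completion $\overline{\mathbf{X}}$ (invalid runs naming a fresh point), with the identity as outer witness. The paper is equally brief about the one point you flag as the main obstacle --- checking that this run-based representation really is a completion --- so your treatment matches it in both substance and level of detail.
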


\begin{proof}
By assumption, the Wadge game for $f : \mathbf{Z} \mto \mathbf{X}$ is determined. By Observation \ref{obs:player2wins}, Player $\mathrm{II}$ wins the game iff $f$ is continuous. Thus, Player $\mathrm{I}$ wins the game iff $f$ is discontinuous.

A winning strategy for Player $\mathrm{I}$ can be described as a function $\lambda : (\mathbb{N} \cup \{\operatorname{skip}\})^* \to \mathbb{N}$ which needs to satisfy that for any sequence $q \in (\mathbb{N} \cup \{\operatorname{skip}\})^\omega$ the sequence \[g(q) := \lambda(q_{<0})\lambda(q_{<1})\lambda(q_{<2})\lambda(q_{<3})\ldots\] constitutes a name for some $x \in \mathbf{Z}$. Moreover, if removing all $\operatorname{skip}$'s from $q$ produces a name for some $y \in \mathbf{X}$, then it must hold that $y \notin f(x)$.

We can obtain a completion $\overline{\mathbf{X}}$ of $\mathbf{X}$ by using $q \in (\mathbb{N} \cup \{\operatorname{skip}\})^\omega$ as names (with some renumbering), and letting any sequence $q$ were removing the $\operatorname{skip}$'s yields a name for some $y \in \mathbf{X}$ be a name for $y \in \overline{\mathbf{X}}$, and letting any other sequence be a name for a fresh element $\bot$. We find that the function $g$ obtained from the winning strategy of Player $\mathrm{I}$ is a realizer of a continuous multivalued function $G : \overline{\mathbf{X}} \mto \mathbf{Z}$. Moreover, we have that $f \circ G : \overline{\mathbf{X}} \mto \mathbf{X}$ tightens $\neqX{\mathbf{X}} :  \overline{\mathbf{X}} \mto \mathbf{X}$, and thus witnesses that $\neqX{\mathbf{X}} \leqW^* f$.
\end{proof}

Theorem \ref{theo:neqminimal} generalizes the approach used by Brattka in \cite{discontinuityproblem} to identify the ``least discontinuous'' problem, assuming enough determinacy. The discontinuity problem $\operatorname{DIS}$ studied by Brattka is essentially the same as $\neqX{\Baire}$ (the problems have formally different domains, but the same realizers).

If we are considering multivalued functions with codomain $\mathbb{N}$ specifically, we do not need to appeal to the axiom of determinacy to render the corresponding games determined. Instead, we prove the following:

\begin{lemma}\label{sec:det-games-N}
    Let $f:\subseteq \Baire \mto \Nb$ be a partial multivalued function. Then, the Wadge game for $f$ is determined.
  \end{lemma}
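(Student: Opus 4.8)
The plan is to exploit the fact that the payoff for Player $\mathrm{II}$ depends on her infinite play only through the single natural number $y$ it codes, which makes the winning condition essentially a countable union of conditions, each of which can be handled by a closed or open game to which Gale--Stewart applies.

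Let me sketch the obstacle first. Unlike the general case (Theorem~\ref{theo:neqminimal}), here we want to avoid $\mathrm{AD}$, so we cannot simply invoke determinacy of the Wadge game directly. The difficulty is that the winning condition for $\mathrm{II}$ is a conjunction of three clauses with precedence, and clause 3 (that $y \in f(x)$) refers to both players' full plays. The main work will be to show that, \emph{because the codomain is $\Nb$}, the set of plays won by one of the players is sufficiently simple (Borel, in fact low in the Borel hierarchy) that its determinacy follows from $\mathrm{ZF}+\mathrm{DC}$ alone via Borel determinacy, or better, via a direct Gale--Stewart argument after an appropriate reformulation.

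Concretely, I would proceed as follows. First I would argue that if Player $\mathrm{II}$ does \emph{not} have a winning strategy, then by Observation~\ref{obs:player2wins} the function $f$ is discontinuous, and I want to produce a winning strategy for Player $\mathrm{I}$ explicitly, rather than appealing to abstract determinacy. The key observation is that $\mathrm{II}$ commits to her output value $y \in \Nb$ after finitely many moves in the sense that a name for $y \in \Nb$ in the standard representation is determined by a finite initial segment (namely $y = n$ is named by $0^n 1 p$). Thus once $\mathrm{II}$ has played her first $1$, the value $y$ is fixed, and the remainder of the winning condition is whether $y \in f(x)$ for the element $x$ that $\mathrm{I}$ eventually names. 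This lets me stratify the game: for each candidate value $n \in \Nb$, consider the auxiliary game $G_n$ in which $\mathrm{II}$ is constrained to output $n$, and whose winning condition for $\mathrm{I}$ is that $g(q)$ names some $x$ with $n \notin f(x)$ (or that $g(q)$ fails to name an element, in which case rule 1 makes $\mathrm{I}$ lose, so this must be tracked carefully). Each such $G_n$ has a winning condition for $\mathrm{I}$ that is a Boolean combination of a closed condition (``$g(q)$ is a name for some $x \in \mathbf{Z}$'', which is $G_\delta$ or closed depending on the representation) and the condition ``$n \notin f(x)$''.

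The main obstacle I anticipate is handling the condition ``$g(q)$ names an element of $\dom(f)$'' cleanly, since $\dom(f)$ need not be simple; however, for the determinacy argument the relevant point is only that the winning set is Borel, and the standard trick is to observe that the set of plays $q$ for which $\mathrm{I}$'s output is \emph{not} a name at all is itself $\Pi^0_2$ (co-analytic is not needed), so that the whole payoff set sits within the Borel hierarchy at a level where Martin's Borel determinacy theorem---provable in $\mathrm{ZF}+\mathrm{DC}$ with enough replacement, or here at a low enough level to use Gale--Stewart on open/closed games directly after contracting over the finitely-determined output value---applies. I would therefore reduce determinacy of the Wadge game for $f$ to determinacy of countably many games each of which is (essentially) the intersection of an open and a closed condition, hence determined by elementary means, and then assemble a global strategy for the winning player by the standard argument that if $\mathrm{II}$ loses every $G_n$ then $\mathrm{I}$ can combine the strategies, while if $\mathrm{II}$ wins some $G_n$ she wins the full game by committing to that output. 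The delicate bookkeeping will be in verifying that combining $\mathrm{I}$'s strategies across the different values of $n$ yields a single strategy that works regardless of which value $\mathrm{II}$ eventually commits to, which is where I expect the real content of the proof to lie.
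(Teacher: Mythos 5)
There is a genuine gap, in fact two. First, your fallback claim that the payoff set is Borel is false: the lemma is stated for an \emph{arbitrary} partial multivalued function $f : \subseteq \Baire \mto \Nb$, so $\dom(f)$ and the fibers $\{x : n \in f(x)\}$ are arbitrary subsets of Baire space, with no definability hypothesis whatsoever. Rule 1 (``Player $\mathrm{I}$'s moves must name an element of $\dom(f)$'') therefore injects an arbitrary set into the winning condition, and neither Gale--Stewart nor Borel determinacy applies to the game as a whole. (This is precisely why the paper, when it later needs determinacy of backtrack-style games in Subsection \ref{subsec:dichotomies}, has to assume either $\mathrm{AD}$ or that each $f^{-1}(n)$ is Borel; the present lemma deliberately needs no such assumption.) Second, and more seriously, your decomposition into games $G_n$ with the proposed assembly fails. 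Consider $f : \Baire \mto \Nb$ with $f(x) = \{x(0)\}$. For every $n$ there is an $x \in \dom(f)$ with $n \notin f(x)$, so Player $\mathrm{II}$ loses every $G_n$; yet $f$ is continuous, so by Observation \ref{obs:player2wins} Player $\mathrm{II}$ wins the full Wadge game. Thus ``$\mathrm{II}$ loses every $G_n$'' cannot yield a winning strategy for Player $\mathrm{I}$. The defect is that your $G_n$ forget the timing: in the real game Player $\mathrm{II}$ may skip, watch a finite prefix of Player $\mathrm{I}$'s play, and only then commit, so the correct stratification must be over pairs (finite prefix of $\mathrm{I}$'s play, committed value), not over values alone.

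Your underlying intuition --- that determinacy comes from Player $\mathrm{II}$'s commitment to a natural number being fixed after finitely many moves --- is the right one, but the paper implements it by a direct combinatorial characterization rather than by citing any determinacy theorem. Call a finite string $t$ \emph{bad} if for every $i \in \Nb$ there is some $x^i \in \dom(f)$ extending $t$ with $i \notin f(x^i)$; note badness is inherited by prefixes. The paper shows Player $\mathrm{I}$ has a winning strategy iff there is an $x \in \dom(f)$ all of whose prefixes are bad (Player $\mathrm{I}$ follows $x$ while $\mathrm{II}$ skips, and punishes any commitment $i$ by diverting to a suitable $x^i$ extending his current position). If no such $x$ exists, then every $x \in \dom(f)$ has a prefix $t_x$ and a value $i_{t_x}$ with $i_{t_x} \in f(y)$ for all $y \in \dom(f) \cap [t_x]$; Player $\mathrm{II}$'s strategy is to skip until Player $\mathrm{I}$'s play enters some $[t_x]$ and then answer $i_{t_x}$, winning also in the case where $\mathrm{I}$'s play enters no such cylinder, since then it lies outside $\dom(f)$. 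To repair your proof you would need to replace the games $G_n$ by this prefix-indexed analysis, at which point the appeal to open/closed determinacy becomes superfluous.
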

  \begin{proof}
    We show that if Player I does not have a strategy for the game, then Player II does. To do this, we spell out what it means for Player I to have a winning strategy: Player I has a winning strategy if and only if there is an $x\in \dom f$ such that there are infinitely many $t_0\sqsubset t_1\sqsubset t_2\sqsubset\dots\sqsubset x$ such that, for every $i,j\in\Nb$, there is an $x_j^i\in \dom f$ such that $t_j\sqsubseteq x^i_j$ and $i\not\in f(x^i_j)$, as we now show.

    Suppose that the condition above holds, then the strategy for Player I consists simply in playing $t_j$ as long as Player II responds by playing $\emptyset$: if Player II ever commits to a certain value $j$, then by our assumption we have an available move for player I that proves that Player II's commitment is wrong. If Player II never commits, Player I wins since $x\in \dom f$.

    On the other hand, if Player I has a strategy, we can build an $x$ as above by supposing that Player II constantly skips her turn: it is easy to verify that this describes an $x$ as we want.

    Hence, suppose that no $x$ as above exists: this means that for every $x\in\dom f$, there is $t_x\sqsubset x$ and an integer $i_{t_x}$ such that every for every $y\in \dom f\cap [t_x]$ $i_{t_x}\in f(y)$. This gives rise to a strategy for player II: player II plays $\emptyset$ until player I plays some string in a $t_x$, for some $x$ (if player I never does this, then it is not producing an element of $\dom f$, which guarantees that player II wins), and then responds $i_{t_x}$. 
  \end{proof}

\begin{corollary}
\label{corr:mainhalf}
The following are equivalent for a multivalued function $f : \mathbf{Z} \mto \mathbb{N}$:
\begin{enumerate}
\item $f$ is discontinuous.
\item $\accn \leqW^* f$
\end{enumerate}
\end{corollary}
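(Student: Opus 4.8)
The plan is to derive Corollary \ref{corr:mainhalf} by combining Theorem \ref{theo:neqminimal} with the unconditional determinacy result of Lemma \ref{sec:det-games-N}, while observing that the codomain $\mathbb{N}$ lets us dispense with the $\mathrm{AD}$ hypothesis. First I would note the easy direction: if $\accn \leqW^* f$, then $f$ is discontinuous, since $\accn$ (equivalently $\neqX{\Nb}$ by Observation \ref{obs:neqn}) is itself discontinuous and continuous Weihrauch reducibility cannot reduce a discontinuous problem to a continuous one—a continuous reduction of $\accn$ to a continuous $f$ would exhibit a continuous realizer for $\accn$, contradicting discontinuity.

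For the converse, suppose $f : \mathbf{Z} \mto \mathbb{N}$ is discontinuous. The intended route is to run the argument of Theorem \ref{theo:neqminimal} verbatim, but with Lemma \ref{sec:det-games-N} supplying determinacy of the relevant Wadge game rather than $\mathrm{AD}$. Concretely, I would fix a realizer $F : \subseteq \Baire \mto \Nb$ of $f$; since $f$ is discontinuous, $F$ is discontinuous, and by Observation \ref{obs:player2wins} Player $\mathrm{II}$ has no winning strategy in the Wadge game for $F$. Lemma \ref{sec:det-games-N} applies precisely because $F$ has codomain $\Nb$, so the game is determined, whence Player $\mathrm{I}$ has a winning strategy. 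From that strategy I would extract, exactly as in the proof of Theorem \ref{theo:neqminimal}, a completion $\overline{\mathbf{X}}$ of $\mathbf{X} = \Nb$ and a continuous $G : \overline{\mathbf{X}} \mto \mathbf{Z}$ whose composite $f \circ G$ tightens $\neqX{\Nb}$, yielding $\neqX{\Nb} \leqW^* f$; by Observation \ref{obs:neqn} this is $\accn \leqW^* f$.

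The main subtlety I would flag is reconciling domains: Lemma \ref{sec:det-games-N} is stated for $f :\subseteq \Baire \mto \Nb$, i.e.\ on Baire space, whereas Corollary \ref{corr:mainhalf} concerns an arbitrary represented $\mathbf{Z}$. The clean way to handle this is to pass to a realizer $F$ of $f$ on name space—$F$ is discontinuous iff $f$ is—and to play the Wadge game for $F$, so that Lemma \ref{sec:det-games-N} applies literally. One then only needs that a reduction $\neqX{\Nb} \leqW^* F$ on the level of realizers transfers to $\neqX{\Nb} \leqW^* f$, which is immediate since $F$ realizes $f$. I would therefore expect the bookkeeping around this realizer/represented-space translation to be the only real obstacle; the determinacy input and the strategy-to-reduction extraction are already supplied by Lemma \ref{sec:det-games-N} and the body of Theorem \ref{theo:neqminimal} respectively, so the corollary amounts to citing these two with the observation that codomain $\Nb$ trades $\mathrm{AD}$ for the effective game analysis.
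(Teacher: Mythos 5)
Your proposal is correct and follows essentially the same route as the paper: the paper's proof is exactly the citation of Theorem \ref{theo:neqminimal} with Lemma \ref{sec:det-games-N} supplying determinacy in place of $\mathrm{AD}$ and Observation \ref{obs:neqn} converting $\neqX{\Nb}$ into $\accn$. The realizer/name-space bookkeeping you flag (so that the lemma, stated for $f : \subseteq \Baire \mto \Nb$, applies to arbitrary $\mathbf{Z}$) is handled correctly in your write-up and is simply left implicit in the paper, since the Wadge game for $f : \mathbf{Z} \mto \Nb$ is by definition played on names.
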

\begin{proof}
This follows from Theorem \ref{theo:neqminimal}, with Lemma \ref{sec:det-games-N} removing the need to invoke $\mathrm{AD}$ and Observation \ref{obs:neqn} letting us replace $\neqX{\Nb}$ by $\accn$.
\end{proof}

In the very same way we also obtain the following:

\begin{corollary}
The following are equivalent for a multivalued function $f : \mathbf{Z} \mto \mathbf{n}$:
\begin{enumerate}
\item $f$ is discontinuous.
\item $\mathsf{ACC}_\mathbf{n} \leqW^* f$
\end{enumerate}
\end{corollary}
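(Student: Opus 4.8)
The plan is to mirror the proof of Corollary \ref{corr:mainhalf} line for line, replacing the codomain $\Nb$ by $\mathbf{n}$ throughout. The backward implication $2 \imp 1$ is immediate for $n \geq 2$, since then $\mathsf{ACC}_\mathbf{n}$ is discontinuous and continuous Weihrauch reducibility transports continuity downward: if $f$ were continuous with continuous realizer $F$, then any $g \leqW^* f$ would be realized by the continuous composition of the reduction witnesses with $F$, hence would itself be continuous. Thus $\mathsf{ACC}_\mathbf{n} \leqW^* f$ forces $f$ to be discontinuous. The degenerate cases $n \leq 1$ are checked directly (for $n=1$ every such $f$ is continuous, so both sides fail).

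For the forward implication $1 \imp 2$ I would first establish the exact analogue of Lemma \ref{sec:det-games-N} for problems $f : \subseteq \Baire \mto \mathbf{n}$, namely that the Wadge game for such an $f$ is determined. I claim the proof of Lemma \ref{sec:det-games-N} applies without change. That argument nowhere uses that the codomain is all of $\Nb$; it uses only that Player $\mathrm{II}$ wins by eventually committing to a single value of the (discrete) codomain, and the combinatorial characterization of a winning strategy for Player $\mathrm{I}$ (in terms of the strings $t_j$ and a value $i$) is insensitive to whether $i$ ranges over $\Nb$ or over the finite set $\{0, \ldots, n-1\}$. Since $\mathbf{n}$ is discrete, names for its points again stabilize on a value, so the case distinction—either some $t_x \sqsubset x$ forces a fixed value $i_{t_x}$ on $\dom f \cap [t_x]$, or Player $\mathrm{I}$ can perpetually refute every commitment of Player $\mathrm{II}$—carries over verbatim.

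With determinacy of the game in hand, I would then run the argument of Theorem \ref{theo:neqminimal} for $f : \mathbf{Z} \mto \mathbf{n}$. Its proof appeals to $\mathrm{AD}$ solely to know that the particular Wadge game for $f$ is determined; the lemma just proved supplies this outright, so the conclusion holds with no determinacy assumption. A discontinuous $f$ thus yields a winning strategy for Player $\mathrm{I}$, from which the construction of Theorem \ref{theo:neqminimal} produces a continuous $G : \overline{\mathbf{n}} \mto \mathbf{Z}$ with $f \circ G$ tightening $\neqX{\mathbf{n}}$, witnessing $\neqX{\mathbf{n}} \leqW^* f$. Finally I would invoke Observation \ref{obs:neqn}, which gives $\neqX{\mathbf{n}} \equivsW \mathsf{ACC}_\mathbf{n}$, to rewrite this as $\mathsf{ACC}_\mathbf{n} \leqW^* f$.

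I expect no genuine obstacle: the entire content is that every ingredient used for the codomain $\Nb$ was already stated or proved in a form agnostic to the exact discrete codomain. The one point deserving a sentence of care is the transfer of Lemma \ref{sec:det-games-N}, i.e.\ checking that its determinacy proof never exploited the infinitude of $\Nb$; once that is observed, the chain Theorem \ref{theo:neqminimal} $\to$ Observation \ref{obs:neqn} closes the argument exactly as before.
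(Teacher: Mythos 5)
Your proposal is correct and takes essentially the same approach as the paper, whose entire proof of this corollary is the remark that it follows ``in the very same way'' as Corollary \ref{corr:mainhalf}: namely, via Theorem \ref{theo:neqminimal}, a finite-codomain analogue of the determinacy Lemma \ref{sec:det-games-N}, and Observation \ref{obs:neqn} to replace $\neqX{\mathbf{n}}$ by $\mathsf{ACC}_\mathbf{n}$ --- exactly the chain you assemble. Your added checks (that the determinacy proof never uses the infinitude of $\Nb$, that continuity is inherited downward under $\leqW^*$, and the degenerate cases $n \leq 1$) simply make explicit what the paper leaves implicit.
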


Similar to the first-order part, the $k$-finitary part of a Weihrauch degree $g$ is the maximal degree of some $f : \mathbf{X} \mto \mathbf{k}$ with $f \leqW g$ \cite{paulycipriani1}. We then see that, up to some oracle, the weakest non-trivial $k$-finitary part is just $\mathsf{ACC}_\mathbf{n}$.  

\subsection{On the complexity of $\neqX{\mathbf{X}}$}
\label{subsec:neqx}
Since $\neqX{\mathbf{X}}$ characterizes the least discontinuous degree of discontinuity possible for a multivalued function with codomain $\mathbf{X}$, we will briefly explore how the degree of $\neqX{\mathbf{X}}$ varies with the space $\mathbf{X}$. To roughly summarize our results, sufficiently complicated spaces all share the same degree (that of $\neqX{\Baire}$), while for scattered spaces we see substantial variation.

\begin{proposition}
\label{prop:neqxsubspace}
Let $\emptyset \neq \mathbf{X} \subseteq \mathbf{Y}$. Then $\neqX{\mathbf{Y}} \leqsW \neqX{\mathbf{X}}$.
\begin{proof}
By the defining properties of the completion, the computable partial map $\id : \subseteq \mathbf{Y} \to \mathbf{X}$ extends to a computable multivalued map $K : \overline{\mathbf{Y}} \mto \overline{\mathbf{X}}$ satisfying $K(x) = x$ for all $x \in \mathbf{X}$. This serves as inner reduction witness, together with $\id : \mathbf{X} \to \mathbf{Y}$ as outer reduction witness.
\end{proof}
\end{proposition}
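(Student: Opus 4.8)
The plan is to exhibit explicit inner and outer reduction witnesses realizing the strong Weihrauch reduction $\neqX{\mathbf{Y}} \leqsW \neqX{\mathbf{X}}$. Recall that $\neqX{\mathbf{Y}}$ has domain $\overline{\mathbf{Y}}$ and $\neqX{\mathbf{X}}$ has domain $\overline{\mathbf{X}}$, so the inner witness must turn a name of a point $x \in \overline{\mathbf{Y}}$ into a name of a point of $\overline{\mathbf{X}}$, while the outer witness---since the reduction is strong---may only inspect an output $y \in \mathbf{X}$ of $\neqX{\mathbf{X}}$ and must return an element of $\neqX{\mathbf{Y}}(x) = \{y' \in \mathbf{Y} : y' \neq x\}$.

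First I would produce the inner witness. The partial identity $\id : \subseteq \mathbf{Y} \to \mathbf{X}$ with domain exactly $\mathbf{X}$ is computable (since a name of $x \in \mathbf{X}$ viewed in $\mathbf{Y}$ is already a name of $x$ in the subspace $\mathbf{X}$), hence so is $\iota_{\overline{\mathbf{X}}} \circ \id : \subseteq \mathbf{Y} \to \overline{\mathbf{X}}$; precomposing with the representation of $\overline{\mathbf{Y}}$ yields a partial computable map $\subseteq \Cantor \to \overline{\mathbf{X}}$ defined on all names (in $\overline{\mathbf{Y}}$) of points of $\mathbf{X}$. Because $\overline{\mathbf{X}}$ is a completion it is multiretraceable, so this partial map extends to a total computable one, which realizes a computable multivalued map $K : \overline{\mathbf{Y}} \mto \overline{\mathbf{X}}$. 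The key property, which I would record explicitly, is that $K(x) = x$ for every $x \in \mathbf{X}$ (identifying $x$ with its images under the two embeddings), since on names of such points the total extension still agrees with the original partial map.

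The outer witness is simply the inclusion $\id : \mathbf{X} \to \mathbf{Y}$, which never inspects the original input and so keeps the reduction strong. To verify correctness, suppose $y \in \neqX{\mathbf{X}}(K(x))$, i.e.\ $y \in \mathbf{X}$ with $y \neq K(x)$; I must check $y \neq x$ in $\mathbf{Y}$. I would split into two cases. If $x \notin \mathbf{X}$, then $y \in \mathbf{X}$ and $x \notin \mathbf{X}$ already force $y \neq x$, which handles both $x \in \mathbf{Y} \setminus \mathbf{X}$ and the fresh points $x \in \overline{\mathbf{Y}} \setminus \mathbf{Y}$. If instead $x \in \mathbf{X}$, then $K(x) = x$, and $y \neq K(x)$ gives $y \neq x$ directly. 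In every case $y \in \neqX{\mathbf{Y}}(x)$, so the two witnesses realize the reduction.

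The hard part will be the construction of $K$ together with the justification that $K(x) = x$ on $\mathbf{X}$: this is exactly where the defining property of a completion (multiretraceability of the target $\overline{\mathbf{X}}$) is needed, and one must check that the extension furnished by multiretraceability does not disturb the values already prescribed on names of points of $\mathbf{X}$. Once $K$ is in hand the remainder is a short case analysis, made clean by the fact that the output $y$ always lands in $\mathbf{X}$ and is therefore automatically distinct from any $x$ lying outside $\mathbf{X}$.
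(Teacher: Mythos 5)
Your proposal is correct and follows essentially the same route as the paper: the partial identity $\id : \subseteq \mathbf{Y} \to \mathbf{X}$ is extended, via multiretraceability of the completion $\overline{\mathbf{X}}$, to a computable multivalued $K : \overline{\mathbf{Y}} \mto \overline{\mathbf{X}}$ with $K(x) = x$ on $\mathbf{X}$, which serves as inner witness alongside the inclusion $\id : \mathbf{X} \to \mathbf{Y}$ as outer witness. Your explicit case analysis (distinguishing $x \in \mathbf{X}$ from $x \notin \mathbf{X}$, including the fresh points of $\overline{\mathbf{Y}}$) merely spells out the verification the paper leaves implicit.
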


\begin{proposition}
\label{prop:neqxsurjection}
Let $s : \mathbf{X} \to \mathbf{Y}$ be a computable surjection, and let $t : \mathbf{Y} \mto \mathbf{X}$ be computable with $s \circ t = \id_\mathbf{Y}$. Then $\neqX{\mathbf{X}} \leqsW \neqX{\mathbf{Y}}$.
\begin{proof}
The reduction is witnessed by the lifting of $s$ to $\overline{s} : \overline{\mathbf{X}} \mto \overline{\mathbf{Y}}$ together with $t$.
\end{proof}
\end{proposition}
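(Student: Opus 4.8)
The plan is to use the extension of $s$ to the completions as the inner reduction witness and the multivalued section $t$ as the outer reduction witness, and then to verify that together they constitute a strong Weihrauch reduction $\neqX{\mathbf{X}} \leqsW \neqX{\mathbf{Y}}$. Because the reduction is required to be strong, the outer witness must depend only on the answer returned by the oracle $\neqX{\mathbf{Y}}$ and not on the original input to $\neqX{\mathbf{X}}$; this will be automatic, since $t$ is applied directly to the oracle's output.

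First I would produce the inner witness $\overline{s} : \overline{\mathbf{X}} \mto \overline{\mathbf{Y}}$ by the same completion argument used in Proposition \ref{prop:neqxsubspace} and Proposition \ref{prop:completions}. The composite $\iota_\mathbf{Y} \circ s : \mathbf{X} \to \overline{\mathbf{Y}}$ is computable, hence so is the corresponding partial computable map on names defined on (the image of) $\mathbf{X}$; as $\overline{\mathbf{Y}}$ is multiretraceable, this partial map extends to a total computable $\overline{s} : \overline{\mathbf{X}} \mto \overline{\mathbf{Y}}$ which agrees with $s$ on $\mathbf{X}$, in the sense that $\overline{s}(x) = s(x) \in \mathbf{Y} \subseteq \overline{\mathbf{Y}}$ whenever $x \in \mathbf{X}$. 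I would then simply take $t$ as the outer witness.

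The substance of the proof is the verification. Fix an input $x \in \overline{\mathbf{X}}$ of $\neqX{\mathbf{X}}$, let $z \in \neqX{\mathbf{Y}}(\overline{s}(x))$ be an arbitrary oracle answer, so that $z \in \mathbf{Y}$ with $z \neq \overline{s}(x)$, and let $x' \in t(z)$, so that $s(x') = z$ by the identity $s \circ t = \id_\mathbf{Y}$. I must show $x' \in \neqX{\mathbf{X}}(x)$, i.e.~$x' \neq x$. I would split into two cases. If $x \notin \mathbf{X}$, then $x' \neq x$ holds trivially, since $x' \in \mathbf{X}$. If $x \in \mathbf{X}$, then $\overline{s}(x) = s(x) \in \mathbf{Y}$, and were $x' = x$ we would obtain $z = s(x') = s(x) = \overline{s}(x)$, contradicting $z \neq \overline{s}(x)$; hence $x' \neq x$ in this case as well.

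The step requiring the most care, and the only genuine obstacle, is the case $x \in \mathbf{X}$: here one uses the section identity contrapositively, observing that $t$ transports an inequality lying downstream of $s$ back to an inequality upstream of it. I would also remark, invoking Proposition \ref{prop:eq-dom}, that the particular choices of completions $\overline{\mathbf{X}}$ and $\overline{\mathbf{Y}}$ are immaterial, so that the reduction holds for the representatives $\neqX{\mathbf{X}}$ and $\neqX{\mathbf{Y}}$ independently of which completions are fixed.
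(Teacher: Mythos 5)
Your proposal is correct and matches the paper's proof exactly: the paper's (one-line) argument likewise takes the lifting $\overline{s} : \overline{\mathbf{X}} \mto \overline{\mathbf{Y}}$ as the inner witness and $t$ as the outer witness, and your construction of $\overline{s}$ via multiretraceability and your two-case verification (using $s \circ t = \id_\mathbf{Y}$ contrapositively when $x \in \mathbf{X}$) are precisely the details the paper leaves implicit.
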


\begin{corollary}
Let $\mathbf{X}$ be an uncountable Polish space. Then $\neqX{\mathbf{X}} \equivW^* \neqX{\Baire}$.
\begin{proof}
An uncountable Polish space contains a perfect subset, i.e.~a subset homeomorphic to Cantor space\footnote{This claim drastically fails in the computable setting. Gregoriades has constructed an uncountable computable Polish space where all computable points are isolated, and all other points are non-hyper-arithmetic \cite{gregoriades3}.}. By Proposition \ref{prop:neqxsubspace} this yields $\neqX{\mathbf{X}} \leqW^* \neqX{\Cantor}$. Every Polish space has a total Baire space representation up to some oracle (e.g.~\cite[Corollary 4.4.12]{brattkaphd}), which via Proposition \ref{prop:neqxsurjection} shows $\neqX{\Baire} \leqW^* \neqX{\mathbf{X}}$. As $\Cantor$ and $\Baire$ are mutually embeddable, Proposition \ref{prop:neqxsubspace} also implies that $\neqX{\Baire} \equivW \neqX{\Cantor}$.
\end{proof}
\end{corollary}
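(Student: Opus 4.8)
The plan is to sandwich $\neqX{\mathbf{X}}$ between two copies of $\neqX{\Baire}$ using the two structural reductions already available: Proposition \ref{prop:neqxsubspace}, which says that passing to a nonempty subspace only raises the degree of $\neqX{}$, and Proposition \ref{prop:neqxsurjection}, which says that a computable surjection admitting a computable section only lowers it. Both are stated for the strong reduction $\leqsW$, but since I will be forced to invoke them relative to an oracle, the conclusions will come out as $\leqW^*$, which is precisely the strength the statement asks for.

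For the upper bound I would appeal to the perfect set property: an uncountable Polish space contains a subset $P$ homeomorphic to $\Cantor$. This homeomorphism need not be computable, but it is computable relative to some oracle, so $P \cong \Cantor$ as represented spaces up to that oracle. Applying Proposition \ref{prop:neqxsubspace} to the inclusion $P \subseteq \mathbf{X}$ then gives $\neqX{\mathbf{X}} \leqW^* \neqX{P} \equivW^* \neqX{\Cantor}$. For the lower bound I would use that every Polish space admits a total representation $\delta : \Baire \to \mathbf{X}$ up to some oracle (for instance via \cite{brattkaphd}). Reading $\delta$ as a surjection $s : \Baire \to \mathbf{X}$ that is computable relative to the oracle, the map sending a point of $\mathbf{X}$ to (one of) its names is a computable multivalued section $t$ with $s \circ t = \id_{\mathbf{X}}$, so Proposition \ref{prop:neqxsurjection} yields $\neqX{\Baire} \leqW^* \neqX{\mathbf{X}}$.

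It remains to collapse $\neqX{\Cantor}$ onto $\neqX{\Baire}$. Since $\Cantor$ and $\Baire$ embed computably into one another, two applications of Proposition \ref{prop:neqxsubspace} give $\neqX{\Baire} \equivW \neqX{\Cantor}$; chaining this with the two reductions above produces $\neqX{\Baire} \leqW^* \neqX{\mathbf{X}} \leqW^* \neqX{\Cantor} \equivW \neqX{\Baire}$, and hence $\neqX{\mathbf{X}} \equivW^* \neqX{\Baire}$.

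The mathematical ingredients here — the perfect set property and the existence of total Baire space representations — are entirely standard, so I do not expect a serious conceptual obstacle. The one genuinely delicate point is the oracle bookkeeping: the perfect subset, its parametrization, and the total representation are all non-effective in general, so each use of the strong computable reductions must be relativized, and I must be careful to confirm that the section $t$ in the lower-bound step really is computable. The latter is unproblematic, as $t$ does nothing more than return a name of its input point, but it is the step most easily taken for granted.
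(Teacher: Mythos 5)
Your proposal is correct and follows essentially the same route as the paper: the perfect set property combined with Proposition \ref{prop:neqxsubspace} for the upper bound, a total Baire space representation (whose multivalued section is trivially computable) combined with Proposition \ref{prop:neqxsurjection} for the lower bound, and mutual embeddability of $\Cantor$ and $\Baire$ to identify $\neqX{\Cantor}$ with $\neqX{\Baire}$. Your explicit attention to the oracle relativization of the strong reductions is exactly the bookkeeping the paper leaves implicit.
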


\begin{corollary}
$\neqX{\mathcal{O}(\mathbb{N})} \equivsW \neqX{\Baire}$
\begin{proof}
On the one hand $\Baire$ embeds into $\mathcal{O}(\mathbb{N})$ as a subspace, on the other hand $\mathcal{O}(\mathbb{N})$ has a representation with domain $\Baire$. Thus the claim follows from Propositions \ref{prop:neqxsubspace}, \ref{prop:neqxsurjection}.
\end{proof}
\end{corollary}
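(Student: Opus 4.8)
The plan is to establish the two strong reductions $\neqX{\mathcal{O}(\Nb)} \leqsW \neqX{\Baire}$ and $\neqX{\Baire} \leqsW \neqX{\mathcal{O}(\Nb)}$ separately, each as a direct application of one of the two structural propositions already proved. The only facts about $\mathcal{O}(\Nb)$ I would need are (i) that $\Baire$ embeds into $\mathcal{O}(\Nb)$ as a computable subspace, and (ii) that $\mathcal{O}(\Nb)$ carries a total representation with domain $\Baire$. Both are standard features of the space of open subsets of $\Nb$, which may be viewed as $\mathbb{S}^{\Nb}$ with names being enumerations of the open set.

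For the first reduction I would fix a computable embedding $\Baire \hookrightarrow \mathcal{O}(\Nb)$, for instance the one sending $p$ to its graph $\{\langle n, p(n)\rangle : n \in \Nb\}$: given $p$ one enumerates this open set, and from an enumeration of such a graph one recovers $p$ by searching, for each $n$, for the unique pair with first coordinate $n$, so the map is a computable isomorphism onto its image $\mathbf{X} \subseteq \mathcal{O}(\Nb)$. Proposition \ref{prop:neqxsubspace} then yields $\neqX{\mathcal{O}(\Nb)} \leqsW \neqX{\mathbf{X}}$, and since $\mathbf{X}$ and $\Baire$ are mutually (computably) embeddable, two applications of Proposition \ref{prop:neqxsubspace} give $\neqX{\mathbf{X}} \equivsW \neqX{\Baire}$, completing this half.

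For the converse reduction I would take the total representation $s : \Baire \to \mathcal{O}(\Nb)$ furnished by (ii) as a computable surjection, and let $t : \mathcal{O}(\Nb) \mto \Baire$ be the multivalued map assigning to an open set the set of its names. Since names of a point of $\mathcal{O}(\Nb)$ are themselves elements of $\Baire$, the map $t$ is computable (it simply copies the incoming name) and satisfies $s \circ t = \id_{\mathcal{O}(\Nb)}$. Proposition \ref{prop:neqxsurjection} then gives $\neqX{\Baire} \leqsW \neqX{\mathcal{O}(\Nb)}$, and combining the two halves yields $\neqX{\mathcal{O}(\Nb)} \equivsW \neqX{\Baire}$.

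I do not expect a genuine obstacle here; the content lies entirely in matching the hypotheses of the two propositions correctly. The one point demanding a little care is the application of Proposition \ref{prop:neqxsubspace}, which is stated literally for a subspace $\mathbf{X} \subseteq \mathbf{Y}$: I must invoke invariance of $\neqX{(\cdot)}$ under computable isomorphism (itself a consequence of Proposition \ref{prop:neqxsurjection} applied in both directions to an isomorphism) in order to pass from the image $\mathbf{X}$ back to $\Baire$. The verification that the section $t$ is computable is similarly routine, once one observes that the chosen representation of $\mathcal{O}(\Nb)$ is total with domain all of $\Baire$.
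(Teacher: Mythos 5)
Your proposal is correct and takes essentially the same route as the paper: the embedding of $\Baire$ into $\mathcal{O}(\mathbb{N})$ feeds Proposition \ref{prop:neqxsubspace}, and the total representation of $\mathcal{O}(\mathbb{N})$ with domain $\Baire$, with the identity serving as the computable section $t$, feeds Proposition \ref{prop:neqxsurjection}. The additional care you take in passing from the isomorphic image of $\Baire$ inside $\mathcal{O}(\mathbb{N})$ back to $\Baire$ itself (via invariance of $\neqX{(\cdot)}$ under computable isomorphism) is a detail the paper elides but which your treatment handles correctly.
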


For Sierpi\'{n}ski space $\mathbb{S}$ we find that the least degree of discontinuity of a multivalued function with codomain $\mathbb{S}$ is actually no less than the least degree of a discontinuous \emph{function} with codomain $\mathbb{S}$:

\begin{proposition}
\label{prop:neqS}
$\neqX{\mathbb{S}} \equivsW (\mathalpha{\neg} : \mathbb{S} \to \mathbb{S}) \equivW \lpo$
\end{proposition}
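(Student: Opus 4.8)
The plan is to prove the two equivalences separately, treating $\neqX{\mathbb{S}} \equivsW \neg$ first and then $\neg \equivW \lpo$. For the first equivalence the crucial observation is that $\mathbb{S}$ is already multiretraceable, so that the identity $\id : \mathbb{S} \hookrightarrow \mathbb{S}$ is itself a completion. By Proposition~\ref{prop:eq-dom} the strong degree of $\neqX{\mathbb{S}}$ does not depend on the chosen completion, so we may compute it using $\overline{\mathbb{S}} = \mathbb{S}$. With this choice $\neqX{\mathbb{S}} : \mathbb{S} \mto \mathbb{S}$ sends $x$ to $\{y \in \mathbb{S} : y \neq x\}$, and since $\mathbb{S}$ has exactly the two points $\top$ and $\bot$ this set is always the singleton containing the other point; that is, $\neqX{\mathbb{S}}$ is literally the flip $\neg$, giving $\neqX{\mathbb{S}} \equivsW \neg$ at once.

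To justify multiretraceability, I would recall that a computable partial $f : \subseteq \Cantor \to \mathbb{S}$ is nothing but a partial semidecision procedure: its realizer, run on a name $p$, produces a name of $\top$ exactly when it ever writes a $1$. I then define a total extension $F : \Cantor \to \mathbb{S}$ by the same procedure run on all of $\Cantor$, printing $0$'s until (if ever) the realizer emits a $1$, at which point $F$ commits to $\top$. This $F$ is total computable and agrees with $f$ on $\dom f$, witnessing multiretraceability. As a cross-check valid for \emph{any} completion, the embedding $\iota : \mathbb{S} \hookrightarrow \overline{\mathbb{S}}$ always yields $\neqX{\mathbb{S}} \circ \iota = \neg$, since $\neqX{\mathbb{S}}(\iota(x)) = \{y \in \mathbb{S} : \iota(x) \neq \iota(y)\} = \{\neg x\}$, so $\neg \leqsW \neqX{\mathbb{S}}$ holds unconditionally.

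For $\neg \equivW \lpo$ I would exhibit reductions in both directions, noting the asymmetry that forces $\equivW$ rather than $\equivsW$. For $\neg \leqsW \lpo$: feed the $\mathbb{S}$-name of the input $x$ to $\lpo$, obtaining a definitive bit that distinguishes $x = \bot$ from $x = \top$; the outer witness then simply prints a name of the opposite point, using only this bit, hence the reduction is strong. For $\lpo \leqW \neg$: read the $\lpo$-instance $p$ as an $\mathbb{S}$-name and apply $\neg$, obtaining a name $q$ of $\neg x$. In the outer reduction I watch $q$ and the original input $p$ in parallel: a $1$ appearing in $q$ certifies $\neg x = \top$, i.e.\ $p = 0^\omega$, while a $1$ appearing in $p$ certifies $p \neq 0^\omega$, and in each case I output the appropriate bit. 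Exactly one of the two events occurs --- if $p = 0^\omega$ then $q$ is a genuine name of $\top$ and must contain a $1$, whereas if $p \neq 0^\omega$ then $q$ is the unique $\bot$-name $0^\omega$ --- so the procedure always terminates with the correct answer.

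The only real subtlety, and where I would be most careful, is that the reduction $\lpo \leqW \neg$ genuinely consults the original input $p$ and cannot avoid doing so: from the output name $q$ alone one can certify $\neg x = \top$ but never certify $\neg x = \bot$ in finite time. This is exactly why the claimed equivalence with $\lpo$ is only $\equivW$ and not $\equivsW$, and it is worth stating explicitly so that the mixed use of $\equivsW$ and $\equivW$ in the statement reads as intentional. All remaining verifications --- that the named procedures are computable and that the extension $F$ really agrees with $f$ on its domain --- are routine.
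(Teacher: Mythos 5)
Your proposal is correct and takes essentially the same approach as the paper's proof: multiretraceability of $\mathbb{S}$ together with Proposition~\ref{prop:eq-dom} identifies $\neqX{\mathbb{S}}$ with $\neg$, and the two explicit reductions give $\neg \equivW \lpo$ (with the non-strongness of $\lpo \leqW \neg$ noted as an aside, just as in the paper). The only difference is that you fill in details the paper dismisses as ``obvious''/``easy'', namely the proof that $\mathbb{S}$ is multiretraceable and the concrete reduction witnesses.
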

\begin{proof}
Notice that $\mathbb{S}$ is multiretraceable, so the first (strong) equivalence follows from Proposition \ref{prop:eq-dom}. Moreover, it is obvious that $(\mathalpha{\neg}:\mathbb{S}\to\mathbb{S})\leqsW \lpo$. Whereas it is easy to see that $\lpo\leqW(\mathalpha{\neg}:\mathbb{S}\to\mathbb{S})$, a straightforward continuity argument yields that this reduction cannot be made strong.
\end{proof}

\begin{corollary}
Let $P : \mathbf{X} \mto \mathbb{S}$ be discontinuous. Then $\lpo \leqW^* P$.
\begin{proof}
By combining Theorem \ref{theo:neqminimal} with Proposition \ref{prop:neqS}, together with the straight-forward observation that generalized Wadge games for multivalued functions with codomain $\mathbb{S}$ are always determined.
\end{proof}
\end{corollary}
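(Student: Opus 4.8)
The plan is to follow exactly the template of Corollary \ref{corr:mainhalf}, replacing the codomain $\mathbb{N}$ by $\mathbb{S}$. By Observation \ref{obs:player2wins}, the discontinuity of $P$ is equivalent to Player $\mathrm{II}$ not having a winning strategy in the Wadge game for $P : \mathbf{X} \mto \mathbb{S}$. If I can show that this game is determined without appealing to $\mathrm{AD}$, then Player $\mathrm{I}$ must have a winning strategy, and the construction in the proof of Theorem \ref{theo:neqminimal} -- whose second half makes no use of determinacy -- turns this strategy into a reduction witnessing $\neqX{\mathbb{S}} \leqW^* P$. Proposition \ref{prop:neqS} then supplies $\neqX{\mathbb{S}} \equivsW \lpo$, and composing gives $\lpo \leqW^* P$. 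In other words, the only thing that needs verifying beyond what is already established is the determinacy of the game for codomain $\mathbb{S}$.

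The real work, as in Lemma \ref{sec:det-games-N}, is exactly this determinacy statement, and I would prove it by a direct combinatorial analysis exploiting the asymmetry of the representation of $\mathbb{S}$: a name for $\top$ is witnessed at a finite stage (by outputting a single $1$), whereas the only name for $\bot$ is $0^\omega$ and hence requires an infinite commitment. Call a finite play $t$ of Player $\mathrm{I}$ \emph{safe} if every $x \in [t] \cap \dom(P)$ satisfies $\top \in P(x)$. The natural strategy for Player $\mathrm{II}$ is to keep outputting $0$ until Player $\mathrm{I}$ reaches a safe position, and to output a $1$ as soon as this happens; one checks directly that this strategy wins precisely when every $x \in \dom(P)$ either has a safe prefix or satisfies $\bot \in P(x)$. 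This plays the role of the characterization of Player $\mathrm{II}$'s winning condition obtained in Lemma \ref{sec:det-games-N}, now with the single target value $\top$.

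Determinacy then reduces to showing that the failure of this condition hands Player $\mathrm{I}$ a winning strategy. If some $x \in \dom(P)$ has no safe prefix and $\bot \notin P(x)$, so that $P(x) = \{\top\}$, then for each prefix $x_{<n}$ there is a witness $x'_n \in [x_{<n}] \cap \dom(P)$ with $P(x'_n) = \{\bot\}$. Player $\mathrm{I}$ plays along $x$ while Player $\mathrm{II}$ outputs $0$'s, and the moment Player $\mathrm{II}$ plays her first $1$ at a stage where $\mathrm{I}$ has produced $x_{<n}$, Player $\mathrm{I}$ diverts to complete $x'_n$ (which is possible precisely because $x'_n$ extends $x_{<n}$); since Player $\mathrm{II}$ is now committed to $\top \notin P(x'_n)$ she loses, while if she never commits, Player $\mathrm{I}$ completes $x$ and Player $\mathrm{II}$ is stuck with $\bot \notin P(x)$. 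I expect the only genuinely new ingredient compared to Lemma \ref{sec:det-games-N} to be isolating this $\top/\bot$ asymmetry correctly; once the notion of a safe position is in place, the diversion argument and the remaining bookkeeping go through verbatim, and, being entirely finitary in its case analysis, the determinacy observation needs no determinacy axioms.
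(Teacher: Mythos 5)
Your proposal is correct and takes essentially the same route as the paper: the paper's proof is precisely ``determinacy of the Wadge game for codomain $\mathbb{S}$ (asserted as a straightforward observation) plus the Player-$\mathrm{I}$ half of Theorem \ref{theo:neqminimal} plus Proposition \ref{prop:neqS}''. Your safe-position/diversion argument is a correct proof of the determinacy claim the paper leaves unproved, exploiting exactly the irreversible-commitment asymmetry of names for $\top$ versus $\bot$ that makes the observation ``straightforward''.
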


The following shows that two scattered spaces of Cantor-Bendixson rank $2$ can yield different least degrees of discontinuity:
\begin{proposition}
$\neqX{\mathbb{N} \times (\omone)} \leW \neqX{\frac{\Nb}{\Nb}}$
\begin{proof}
The reduction follows via Proposition \ref{prop:neqxsurjection}. To see that it is strict we assume for the sake of a contradiction that $\neqX{\frac{\Nb}{\Nb}} \leqW^* \neqX{\mathbb{N} \times (\omone)}$ via $K$ and $H$. We can readily verify that the inner reduction witness $H$ needs to map the completely undefined name in the completion of $\frac{\Nb}{\Nb}$ to the completely undefined name in the completion of $\mathbb{N} \times (\omone)$. In particular, $K$ will start outputting a name for some $\frac{\cdot}{f(n)}$ upon having read sufficiently long prefixes of the completely undefined name and of $(n, \omega) \in \mathbb{N} \times (\omone)$. Now consider what happens if the original input is changing from being undefined to being $\frac{\cdot}{f(n)}$ after that prefix has passed. For the reduction to work, it needs to be the case that $H$ is mapping this name for $\frac{\cdot}{f(n)}$ to $(n,\omega)$, and that $K$ adjusts its output to some $\frac{g(m)}{\cdot}$ depending on which $(n, m)$ it receives from the oracle. However, we can again let a sufficiently long prefix pass, and now adjust the original input to be a name for $\frac{g(m)}{\cdot}$ for some $m$ such that $H$ has already forgone the opportunity to output $(n,m)$ given the prefix already read. This constitutes the desired contradiction.
\end{proof}
\end{proposition}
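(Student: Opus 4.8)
The plan is to prove the two inequalities behind $\leW$ separately: the reduction $\neqX{\mathbb{N}\times(\omone)}\leqsW\neqX{\frac{\Nb}{\Nb}}$ through Proposition~\ref{prop:neqxsurjection}, and the strictness $\neqX{\frac{\Nb}{\Nb}}\not\leqW^*\neqX{\mathbb{N}\times(\omone)}$ by a direct back-and-forth argument against a hypothetical pair of continuous reduction witnesses.

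For the reduction I would exhibit a computable surjection $s:\mathbb{N}\times(\omone)\to\frac{\Nb}{\Nb}$ admitting a computable section and then quote Proposition~\ref{prop:neqxsurjection}. I would send the limit points to the bottom layer, $s(k,\omega)=\frac{\cdot}{k}$, and the isolated points to the top layer, $s(k,n)=\frac{\rho(n)}{\cdot}$, where $\rho:\mathbb{N}\to\mathbb{N}$ is a surjection \emph{all of whose fibres are infinite} (e.g.\ a projection of a pairing). Continuity of $s$ is immediate, since in $\frac{\Nb}{\Nb}$ any sequence of top points converges to every bottom point, and a realizer is routine. The delicate part is the section $t:\frac{\Nb}{\Nb}\mto\mathbb{N}\times(\omone)$: reading a name of the input, $t$ copies the binary prefix up to its first $1$ into the first coordinate (emitting the $1$ at once if a switch symbol arrives first), which fixes a column $k$; this is legitimate because for a top point \emph{any} column is a preimage. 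On the second coordinate $t$ emits $0$'s lazily, so that absent a switch it names $\omega$ and outputs $(k,\omega)$, while upon a switch revealing $\frac{j}{\cdot}$ it completes the coordinate to some $n$ with $\rho(n)=j$ large enough to extend what was already written. The infinitude of $\rho^{-1}(j)$ is exactly what makes this last step possible; a continuity argument on the names $0^{f}1\,0^{\ell}\,2\,0^{j}1\,0^\omega\to 0^{f}1\,0^\omega$ shows it is also necessary.

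For strictness I would assume $\neqX{\frac{\Nb}{\Nb}}\leqW^*\neqX{\mathbb{N}\times(\omone)}$ via continuous witnesses $H$ (inner) and $K$ (outer) and build a counterexample in stages, working in the game-style completion so that every prefix of the undefined name is still completable to a name of \emph{any} genuine point. Feeding the undefined name together with the oracle answer $(n,\omega)$ lets me record the point $K$ commits to; the representative case is a bottom point $\frac{\cdot}{f(n)}$. Revealing the input to be $\frac{\cdot}{f(n)}$ refutes the reduction outright unless $H(\frac{\cdot}{f(n)})=(n,\omega)$, so I may assume the latter; this is the device that \emph{forces} $H$ both to declare the first coordinate $n$ and to keep pumping genuine $0$'s into the $\omone$-coordinate. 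Since the committed prefix already fixes the first coordinate to $n$, the only admissible corrections of the oracle answer are of the form $(n,m)$, and to one of these $K$ responds with some new point $\frac{g(m)}{\cdot}$.

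The heart of the separation—and the step I expect to be hardest—is to close the loop so that the point $K$ finally commits to can be revealed as $x$ while the chosen oracle answer stays valid. Here I would exploit the topological asymmetry between the two spaces, in two linked forms. First, \emph{foreclosure is permanent}: once $H$ has emitted more than $m$ genuine $0$'s on the $\omone$-coordinate it can never retroactively place the digit $1$ after exactly $m$ zeros, so for \emph{every} extension of the input $H$'s image has $\omone$-coordinate either $\omega$ or a value exceeding $m$, whence $(n,m)\neq H(x)$ remains a valid oracle answer forever. Thus $m$ should be picked in the window between the number of $0$'s already shown to $K$ (so $(n,m)$ genuinely extends the committed answer) and the number of genuine $0$'s $H$ has produced (so $(n,m)$ is foreclosed), a window kept nonempty by the infinite tail of the $\frac{\cdot}{f(n)}$-branch. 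Second, a prefix heading towards a bottom point of $\frac{\Nb}{\Nb}$ can still be switched to \emph{any} top point, and $K$'s output commits to a top point \emph{irrevocably} (unlike a bottom point, which it may later abandon by a switch); so once $K$ is pinned to $\frac{g(m)}{\cdot}$, revealing $x=\frac{g(m)}{\cdot}$ with the still-valid answer $(n,m)$ yields $K(x,(n,m))=\frac{g(m)}{\cdot}=x$, the contradiction. The obstacle I anticipate, and the point where the real work lies, is precisely managing these simultaneous delays—the input, $H$'s output, and $K$'s output all postponing their commitment—so as to trap $K$ on a top answer rather than let it hedge towards bottom points; the remaining bookkeeping is to dispatch that hedging case and to verify (or sidestep, by choosing $n$ to avoid a genuine $H(\bot)$) the auxiliary claim that $H$ maps the undefined name to the undefined name.
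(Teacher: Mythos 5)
Your proposal is correct and follows essentially the same route as the paper: the reduction via Proposition \ref{prop:neqxsurjection} (which the paper leaves implicit and you spell out with an explicit surjection and infinite-fibre section), and strictness by the same adversary argument — undefined input with answer $(n,\omega)$, forcing $H$ to output $(n,\omega)$, then switching to a foreclosed answer $(n,m)$ chosen in the window you describe, and finally pivoting the input onto $K$'s irrevocable top-point commitment $\frac{g(m)}{\cdot}$. The two items you defer as bookkeeping (that $H$ sends the undefined name to the undefined name, and the case where $K$ commits early to a top point) are glossed at the same level of detail in the paper's own proof, and the tools you already state — irrevocability of top-point commitments plus the fact that at most one candidate answer can coincide with $H$'s image — suffice to dispatch them.
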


\subsection{More dichotomies in the continuous first-order Weihrauch degrees}
\label{subsec:dichotomies}

As shown in \cite{pauly-nobrega}, every lower cone in the Weihrauch degrees can be characterized by a Wadge-style game. We can thus use the core idea of Theorem \ref{theo:neqminimal} to identify further dichotomies. In principle, we can obtain a dual to any given Weihrauch degree (assuming enough determinacy) (i.e.~pairs $f$, $g$ such that any Weihrauch degree $h$ satisfies exactly one of $f \leqW^* h$ and $h \leqW^* g$). However, in general the construction will be very cumbersome and come with rather strong determinacy requirements. If we restrict to first-order problems, we can actually obtain some insight, as we shall demonstrate next.

Our starting point is a higher-level counterpart to $\accn$:

\begin{definition}
The problem $\Pi^0_2\mathrm{ACC}_\mathbb{N}$ receives as input a $\Pi^0_2$-subset $A \subseteq \mathbb{N}$ with $|\mathbb{N} \setminus A| \leq 1$ and returns some $i \in A$.
\end{definition}

It is often more convenient to think about $\Pi^0_2\mathrm{ACC}_\mathbb{N}$ as the problem ``Given some $p \in \Baire$, find some $k \in \mathbb{N}$ such that $\lim_{n \to \infty} p(n) \neq k$'', where the inequality is assumed to be satisfied if the limit does not exist. Our interest in $\Pi^0_2\mathrm{ACC}_\mathbb{N}$ stems from the fact that for first-order problems, it forms a dichotomy with $\C_\mathbb{N}$ as the following two theorems (which share a proof) show:

\begin{theorem}[$\mathrm{ZF} + \mathrm{DC} + \mathrm{AD}$]
For any problem $f$ exactly one of the following holds:
\begin{enumerate}
\item $^1(f) \leqW^* \C_\mathbb{N}$
\item $\Pi^0_2\mathrm{ACC}_\mathbb{N} \leqW^* f$
\end{enumerate}
\end{theorem}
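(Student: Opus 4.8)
The plan is to craft, following the template of Theorem \ref{theo:neqminimal} together with the lower-cone games of \cite{pauly-nobrega}, a single Wadge-style game $\Gamma_f$ for $f$ that is determined under $\mathrm{AD}$ and whose two winning conditions correspond to the two alternatives. The guiding intuition is the parallel between $\accn$ and continuity on one side and $\Pi^0_2\mathrm{ACC}_\mathbb{N}$ and $\C_\mathbb{N}$ on the other: in Theorem \ref{theo:neqminimal} Player II commits to a single output and her winning strategies witness continuity, with $\neqX{\mathbb{N}} \equivsW \accn$ as the dual; here I let Player II produce a natural number in ``$\C_\mathbb{N}$-mode'', i.e.\ as a sequence of guesses that she may revise finitely often but must eventually stabilise, so that her winning strategies witness reducibility to $\C_\mathbb{N}$, and the dual becomes the problem of avoiding a stabilising guess --- which is exactly ``find $k$ with $\lim_n p(n) \neq k$'', namely $\Pi^0_2\mathrm{ACC}_\mathbb{N}$. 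Concretely, in $\Gamma_f$ Player I builds a name for an instance $x \in \dom(f)$ together with a name for a solution $y \in f(x)$ (the solution track is needed because first-order reductions read an $f$-solution through an outer functional), while Player II plays her revisable natural-number guess; the winning condition declares Player II the winner iff her guess stabilises to a value that the outer-functional reading of $(x,y)$ confirms as correct.

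First I would record the determinacy and exclusivity bookkeeping. Since Player II's play amounts to a point of $\mathbb{N}$ presented in $\C_\mathbb{N}$-mode, $\Gamma_f$ is a game on $\Baire$ and hence determined under $\mathrm{ZF}+\mathrm{DC}+\mathrm{AD}$; as any zero-sum game has at most one winner, it remains only to rule out both alternatives holding at once. This is the separation $\Pi^0_2\mathrm{ACC}_\mathbb{N} \nleqW^* \C_\mathbb{N}$: were both alternatives to hold, then since $\Pi^0_2\mathrm{ACC}_\mathbb{N}$ is first-order we would get $\Pi^0_2\mathrm{ACC}_\mathbb{N} \leqW^* {}^1(f) \leqW^* \C_\mathbb{N}$, contradicting that any $\C_\mathbb{N}$-reduction stabilises its output after reading a finite prefix (the adversary then sets $\lim_n p(n)$ equal to that output). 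Thus determinacy gives at least one alternative and the separation gives at most one.

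Next, the Player II direction. A winning strategy for Player II reads any presentation of an instance-with-solution of $f$ and returns, with finitely many mind changes, a correct first-order answer; invoking the characterisation of the lower cone of $\C_\mathbb{N}$ as exactly the first-order problems solvable with finitely many mind changes (equivalently, the game of \cite{pauly-nobrega} for $\C_\mathbb{N}$), such a strategy is precisely a continuous reduction to $\C_\mathbb{N}$ of every first-order problem below $f$, and hence witnesses $^1(f) \leqW^* \C_\mathbb{N}$.

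Finally, the Player I direction, which I expect to be the main obstacle. From a winning strategy for Player I I would extract, exactly as in the proof of Theorem \ref{theo:neqminimal}, a continuous map $G$ whose input is a presentation of a number $\ell$ in $\C_\mathbb{N}$-mode --- i.e.\ an arbitrary $p \in \Baire$ with intended value $\ell = \lim_n p(n)$ --- and whose output is the $f$-instance that Player I's strategy builds in response. The winning condition for Player I guarantees that the number extracted from any solution $y \in f(G(p))$ by the outer functional differs from $\ell$; hence $f \circ G$ tightens $\Pi^0_2\mathrm{ACC}_\mathbb{N}$ (when $\lim_n p(n)$ fails to exist the target is all of $\mathbb{N}$ and there is nothing to check), yielding $\Pi^0_2\mathrm{ACC}_\mathbb{N} \leqW^* f$. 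The delicate points, and the reason this is the crux, are (i) formulating the winning condition of $\Gamma_f$ so that it faithfully captures the \emph{first-order part} --- in particular its invariance over the adversarially chosen solution $y \in f(x)$ --- so that Player II's $\C_\mathbb{N}$-mode strategies correspond exactly to $\C_\mathbb{N}$-reductions of $^1(f)$; and (ii) verifying that the extracted $G$ together with the outer functional really realises $\Pi^0_2\mathrm{ACC}_\mathbb{N} \leqW^* f$ over all of $\Baire$, including the oracle relativisation implicit in $\leqW^*$. Once the game is set up correctly, both extractions run along the lines of Theorem \ref{theo:neqminimal}.
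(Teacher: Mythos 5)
Your high-level architecture matches the paper's: a Wadge-style game in which Player $\mathrm{II}$ answers in ``$\C_\mathbb{N}$-mode'' (finitely many mind changes), determinacy under $\mathrm{AD}$, exclusivity from $\Pi^0_2\mathrm{ACC}_\mathbb{N} \not\leqW^* \C_\mathbb{N}$, and extraction of reductions from winning strategies as in Theorem \ref{theo:neqminimal}. The genuine gap is that your game $\Gamma_f$ is never actually defined, and the design you sketch cannot be completed as stated. Its winning condition appeals to ``the outer-functional reading of $(x,y)$'', but there is no canonical outer functional attached to $f$: the first-order part quantifies over all reductions of first-order problems to $f$, each with its own outer functional. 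Worse, letting Player $\mathrm{I}$ play a solution track $y \in f(x)$ is self-defeating in both directions. If the winning condition is evaluated at Player $\mathrm{I}$'s own pair $(x,y)$ (say via a functional code also played by Player $\mathrm{I}$), then Player $\mathrm{II}$ wins trivially: by continuity the value on $(x,y)$ is revealed by a finite prefix, so she waits and changes her mind once, and the game detects nothing about $f$. In the Player $\mathrm{I}$ direction, a winning strategy only constrains the particular solutions $y_p$ that Player $\mathrm{I}$ chooses to play, whereas the reduction $\Pi^0_2\mathrm{ACC}_\mathbb{N} \leqW^* f$ needs its outer functional to defeat \emph{every} solution $y' \in f(x_p)$ returned by the adversarial oracle --- exactly the ``invariance over the adversarially chosen solution'' you flag as delicate point (i); this is the crux, not a detail, and for a general (non-first-order) $f$ you also have no specified way to extract a natural number from $y'$ at all. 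Symmetrically, in the Player $\mathrm{II}$ direction, a strategy that reads the solution track cannot be converted into a reduction $^1(f) \leqW^* \C_\mathbb{N}$, since instances of $^1(f)$ come without $f$-solutions.

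The paper resolves exactly this difficulty by changing where the game is played. It proves the dichotomy first for problems $f : \subseteq \Baire \mto \mathbb{N}$: there the solution \emph{is} the answer, so the identity serves as outer functional. Player $\mathrm{I}$ builds $p \in \dom(f)$, Player $\mathrm{II}$ plays numbers that must eventually be constant with value $n$, and she wins iff $n \in f(p)$; her winning is equivalent, via the backtrack game and the finite-mind-change characterization of the lower cone of $\C_\mathbb{N}$, to $f \leqW^* \C_\mathbb{N}$, while a winning strategy for Player $\mathrm{I}$ immediately tightens $\Pi^0_2\mathrm{ACC}_\mathbb{N}$ with identity outer functional. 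The general theorem then follows by applying this dichotomy to the first-order problem $^1(f)$ itself, using $\mathrm{AD}$ for determinacy (the relevant payoff set need not be Borel) and $^1(f) \leqW f$ for the transfer, plus the separation $\Pi^0_2\mathrm{ACC}_\mathbb{N} \not\leqW^* \C_\mathbb{N}$ for exclusivity. All the quantification over outer functionals is thereby absorbed into the existence of $^1(f)$ as a single first-order problem --- which is precisely the step your single-game design is missing.
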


\begin{theorem}[$\mathrm{ZFC}$]
Let $f : \subseteq \Baire \mto \mathbb{N}$ be such that each $f^{-1}(n)$ is Borel (in $\Baire$). Then exactly one of the following holds:
\begin{enumerate}
\item $f \leqW^* \C_\mathbb{N}$
\item $\Pi^0_2\mathrm{ACC}_\mathbb{N} \leqW^* f$
\end{enumerate}
\end{theorem}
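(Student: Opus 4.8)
The plan is to run the Wadge-game method of Theorem~\ref{theo:neqminimal} one level up, replacing the plain continuity game by the game from \cite{pauly-nobrega} that characterizes the lower cone of $\C_\mathbb{N}$. Write $g$ for the first-order problem under consideration ($g = f$ here, and $g = {}^1 f$ in the companion determinacy version; note that since $\Pi^0_2\mathrm{ACC}_\mathbb{N}$ is first-order we have $\Pi^0_2\mathrm{ACC}_\mathbb{N} \leqW^* f$ iff $\Pi^0_2\mathrm{ACC}_\mathbb{N} \leqW^* {}^1 f$, which is what lets the two theorems share a proof). I would first set up the game $\mathcal{G}(g)$ for $g : \subseteq \Baire \mto \Nb$ in which Player~$\mathrm{I}$ builds an input $x \in \dom g$ while Player~$\mathrm{II}$ is allowed the full strength of $\C_\mathbb{N}$ when committing to an answer: concretely she maintains a current guess that she may retract along the closed-set discipline of $\C_\mathbb{N}$ (equivalently, she enumerates the complement of the nonempty set of surviving candidates), and she wins exactly when her surviving commitment lands in $g(x)$. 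The role played by Observation~\ref{obs:player2wins} in the earlier argument is taken here by the characterization of \cite{pauly-nobrega}: Player~$\mathrm{II}$ has a winning strategy iff $g \leqW^* \C_\mathbb{N}$.

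The substance is the analysis of a winning strategy for Player~$\mathrm{I}$, which I would turn into reduction witnesses for $\Pi^0_2\mathrm{ACC}_\mathbb{N} \leqW^* g$, using the reformulation of $\Pi^0_2\mathrm{ACC}_\mathbb{N}$ as ``given $p \in \Baire$, produce some $k$ with $\lim_n p(n) \neq k$''. The inner witness takes such a $p$ and runs Player~$\mathrm{I}$'s strategy against the Player~$\mathrm{II}$-play synthesized from $p$ in such a way that the value on which Player~$\mathrm{II}$'s $\C_\mathbb{N}$-commitment stabilizes is exactly $\lim_n p(n)$ whenever that limit exists; this produces a continuous map $p \mapsto x_p$, and because Player~$\mathrm{I}$'s strategy is winning we obtain both $x_p \in \dom g$ (so the instance is legal) and, when $\lim_n p(n)$ exists, $\lim_n p(n) \notin g(x_p)$. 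The outer witness is then essentially a projection: any $n \in g(x_p)$ satisfies $n \neq \lim_n p(n)$ and is therefore a correct answer to $\Pi^0_2\mathrm{ACC}_\mathbb{N}(p)$; and if the limit does not exist there is no forbidden value, so any $n \in g(x_p)$ works as well.

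For the ``exactly one'' clause I would observe that the two alternatives are mutually exclusive, since if both held we would have $\Pi^0_2\mathrm{ACC}_\mathbb{N} \leqW^* g \leqW^* \C_\mathbb{N}$; it thus suffices to record the separation $\Pi^0_2\mathrm{ACC}_\mathbb{N} \nleqW^* \C_\mathbb{N}$, which follows from a direct argument showing that identifying a non-limit of a sequence requires genuinely $\Pi^0_2$ information unavailable from the closed ($\Pi^0_1$) data driving $\C_\mathbb{N}$. Finally, determinacy of $\mathcal{G}(g)$ is what makes the dichotomy exhaustive. Under the hypothesis that each $g^{-1}(n) = f^{-1}(n)$ is Borel, the payoff set of $\mathcal{G}(g)$ is a countable Boolean combination of the conditions ``Player~$\mathrm{I}$'s moves name a point of $\dom g$'', ``Player~$\mathrm{II}$'s commitment stabilizes'', and ``the committed value lies in $g(x)$'', all of which are Borel; hence $\mathcal{G}(g)$ is Borel and Martin's Borel determinacy theorem applies within $\mathrm{ZFC}$. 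In the companion theorem, where $g = {}^1 f$ may fail to have Borel fibers, determinacy is instead supplied by $\mathrm{AD}$, and this is the only point at which the two proofs diverge.

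I expect the main obstacle to be the precise synthesis in the second paragraph: matching the retraction discipline of the $\C_\mathbb{N}$-game (commitment to the least surviving candidate, a genuinely $\Pi^0_2$ act) to the $\lim$-avoidance structure of $\Pi^0_2\mathrm{ACC}_\mathbb{N}$, so that Player~$\mathrm{I}$'s winning condition translates exactly into $\lim_n p(n) \notin g(x_p)$, together with the bookkeeping needed to confirm that the resulting payoff set really is Borel under the fiber hypothesis.
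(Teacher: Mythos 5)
Your proposal is correct and follows essentially the same route as the paper: a Wadge-style game in which Player~$\mathrm{II}$ has the power of $\C_\mathbb{N}$ (the paper phrases this via the backtrack game of finitely-many-mind-changes computations, then simplifies, for codomain $\mathbb{N}$, to a game where Player~$\mathrm{II}$ must eventually play a constant), with Player~$\mathrm{II}$'s winning strategies characterizing $f \leqW^* \C_\mathbb{N}$, Player~$\mathrm{I}$'s winning strategies converted into reduction witnesses for $\Pi^0_2\mathrm{ACC}_\mathbb{N} \leqW^* f$ via the $\lim$-avoidance reformulation with the identity as outer functional, Borel determinacy (respectively $\mathrm{AD}$) supplying exhaustiveness, and $\Pi^0_2\mathrm{ACC}_\mathbb{N} \nleqW^* \C_\mathbb{N}$ supplying exclusivity. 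The ``synthesis'' step you flag as the main obstacle is exactly what the paper's intermediate lemma (equivalence of the backtrack game with the eventually-constant game) handles, so your plan fills in the same way.
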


\begin{proof}
Recall that, as shown in \cite[Theorem 7.11]{unif-lbt-brattka-de-brecht-pauly}, a multivalued function $f$ reduces to $\C_{\mathbb{N}}$ if and only if it is computable with finitely many mind-changes. As noted in \cite{pauly-nobrega}, these multfunctions admit a nice characterization in terms of games, as we now explain. We call the \emph{backtrack game for $f$ }(whose definition is implicit in \cite{wesep}) the following: player $\mathrm{I}$ plays a natural number at every turn, whereas player $\mathrm{II}$ has three options, namely she either plays a natural number, or she skips, or she erases all of her previous moves, with the proviso that she can only use this third option finitely many times. Player $\mathrm{II}$ wins if either Player $\mathrm{I}$ fails to build a point $x$ in the domain of $f$, or if she builds a point in $f(x)$ following the rules. It is clear that $f$ is computable with finitely many mind changes if and only if Player $\mathrm{II}$ has a winning strategy in the backtrack game for $f$.

Suppose now that $f$ has codomain $\mathbb{N}$, and consider the following game: at every turn, both Player $\mathrm{I}$ and Player $\mathrm{II}$ play a natural number, and 
\begin{enumerate}
\item The numbers played by Player $\mathrm{I}$ need to form some $p \in \dom(f)$, otherwise he loses.
\item From some time onwards, Player $\mathrm{II}$ needs to play a constant number $n$, otherwise she loses.
\item Finally, Player $\mathrm{II}$ wins iff her ultimately choice $n$ satisfies $n \in f(p)$.
\end{enumerate}
It is not difficult to see that Player $\mathrm{II}$ has a winning strategy in the game above if and only if it has a winning strategy in the backtrack game for $f$. Let us see the right-to-left implication, the other one being analogous: suppose Player $\mathrm{I}$ is playing according to strategy $\lambda$, and let let $p$ be the infinite string such that, for every $i$, $p(i)$ is equal to the last number played by Player $\mathrm{II}$ before turn $i$ when she plays according to $\lambda$. This way, we forget about all her mind-changes and skips, an only keep the moves in which she played a number. Since playing in $\mathbb{N}$ means that she has only one meaningful move, at some point the sequence $p$ stabilizes. It is clear that if $\lambda$ was winning and Player $\mathrm{I}$ produced the point $x$, then $\lim p\in f(x)$.

Suppose now that $f$ does not reduce to $\C_\mathbb{N}$. By either AD or Borel determinacy, we can suppose that Player \textit{I} has a winning strategy in the game described above.

Thanks to the formulation given above of $\Pi^0_2\mathrm{ACC}_\mathbb{N}$, we see that winning strategy $\lambda$ of Player $\mathrm{I}$ for this game yields a witness for the reduction $\Pi^0_2\mathrm{ACC}_\mathbb{N} \leqW^* f$: given $p\in \dom \Pi^0_2\mathrm{ACC}_\mathbb{N}$, we produce a $g(p)\in \dom f$ exactly as we did in Theorem \ref{theo:neqminimal}, namely by putting $g(p)=\lambda(p_{<0})\lambda(p_{<1})...$. The other half of the reduction is given by the identity functional. 

Finally, it is enough to observe that $\Pi^0_2\mathrm{ACC}_\mathbb{N} \not\leqW^* \C_{\mathbb{N}}$ to obtain both theorems.
\end{proof}

\section{Characterising admissibility via discontinuity of functions}

As a side note, we observe that the minimal degree of discontinuity for certain functions into a space can characterize the admissibly represented spaces (amongst the $T_0$-spaces). Admissibility of represented spaces fundamentally characterizes which represented spaces can be treated as topological spaces. More formally, the categories of admissibly represented spaces and of $\mathrm{QCB}_0$ topological spaces coincide, and are full subcategories of the categories of represented spaces and of sequential topological spaces respectively. Much of the investigation of this notion is due to Schr\"oder \cite{schroder,schroder5}.

\begin{proposition}
The following are equivalent for a $T_0$ represented space $\mathbf{X}$:
\begin{enumerate}
\item $\mathbf{X}$ is admissible.
\item Every function $f : (\omone) \to \mathbf{X}$ is either continuous (in the represented space sense) or satisfies $\lpo \equivW^* f$
\end{enumerate}
\begin{proof}
$1 \Rightarrow 2$: Since $\mathbf{X}$ is admissible, for functions into $\mathbf{X}$ having a continuous realizer coincides with topological continuity. For a function of type $f : (\omone) \to \mathbf{X}$ topological continuity can only fail there is some $U \in \mathcal{O}(\mathbf{X})$ such that $\omega \in f^{-1}(U)$, but for infinitely many $n \in \omega$ it holds that
$n \notin f^{-1}(U)$. Consider the continuous multivalued function $H : \mathbb{S} \to (\omone)$ that maps $\top$ to any $n \in \omega$ with $n \notin f^{-1}(U)$ and $\bot$ to $\omega$; and the continuous characteristic function $\chi_U : \mathbf{X} \to \mathbb{S}$. We find that
$\chi_U \circ f \circ H = (\neg : \mathbb{S} \to \mathbb{S})$, hence $\lpo \leqW^* f$. The reduction $f \leqW^* \lpo$ holds for any function
f with domain $\omone$.

$2 \Rightarrow 1$: We have already established that the dichotomy in (2) ensures that for functions of
type $f : (\omone) \to \mathbf{X}$, represented space continuity and realizer continuity coincide. We then note
that this property suffices in place of admissibility in the proof of \cite[Lemma 11]{schroder} (establishing
that admissibly represented spaces have countable pseudobasis). We conclude that $\mathbf{X}$ has a
countable pseudobase. By \cite[Theorem 12]{schroder}, as $\mathbf{X}$ is also $T_0$, it is admissible.
\end{proof}
\end{proposition}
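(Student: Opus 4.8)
The plan is to read admissibility, via Schr\"oder's characterisation, as the assertion that for maps \emph{into} $\mathbf{X}$ realizer continuity and topological continuity coincide, and then to play this coincidence off against the dichotomy by analysing how a map $f : (\omone) \to \mathbf{X}$ can be discontinuous at all. The organising observation is that a map with domain $(\omone)$ is automatically continuous at every natural number, since these are isolated, so the only possible failure of continuity is a ``jump at $\omega$''. I would build both implications around extracting, respectively producing, such a jump.

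For $1 \Rightarrow 2$ I would first use admissibility to identify ``continuous in the represented space sense'' with topological continuity for $f : (\omone) \to \mathbf{X}$. If $f$ is discontinuous, topological continuity fails at $\omega$, which unwinds to the existence of an open $U \ni f(\omega)$ with $f(n) \notin U$ for infinitely many $n$. I would then build $\lpo \leqW^* f$ by taking the continuous $H : \mathbb{S} \mto (\omone)$ sending $\bot$ to $\omega$ and $\top$ to some $n$ with $f(n) \notin U$ (the infinitude of such $n$ is exactly what lets $H$ remain consistent with any block of zeroes it has already committed to), and composing with the characteristic map $\chi_U : \mathbf{X} \to \mathbb{S}$; one then checks $\chi_U \circ f \circ H = (\neg : \mathbb{S} \to \mathbb{S}) \equivW \lpo$. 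The converse reduction $f \leqW^* \lpo$ holds for \emph{every} $f$ with domain $(\omone)$: a single query reveals whether the input is $\omega$ or some $n$, after which one prints the appropriate precomputed name (relative to an oracle coding $(f(n))_n$ and $f(\omega)$), giving $\lpo \equivW^* f$.

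For $2 \Rightarrow 1$ the target is to feed Schr\"oder's machinery. I would show that the dichotomy forces realizer continuity and topological continuity to agree on all $f : (\omone) \to \mathbf{X}$; this is precisely the ingredient that the proof of \cite[Lemma 11]{schroder} uses in place of admissibility to produce a countable pseudobase, after which \cite[Theorem 12]{schroder} together with $T_0$ yields admissibility. One half of the coincidence (realizer continuous implies topologically continuous) is automatic, and the $1 \Rightarrow 2$ construction already establishes the general fact that a genuine \emph{topological} jump at $\omega$ forces $\lpo \leqW^* f$. The real content is the other half: every topologically continuous $f : (\omone) \to \mathbf{X}$ must be realizer continuous.

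The hard part will be exactly this last inclusion, i.e.\ ruling out a topologically continuous but realizer discontinuous $f$. Such an $f$ would carry a convergent sequence $f(n) \to f(\omega)$ that cannot be lifted to a convergent sequence of names, and, being realizer discontinuous, it would satisfy $\lpo \leqW^* f$ by the dichotomy; I would aim for a contradiction by dissecting this reduction. A valid inner reduction is forced to send the instance $0^\omega$ to a name of $\omega$ and the instances $0^k 1 0^\omega$ to names of naturals $x_k \to \omega$ (otherwise a name of $f(\omega)$ itself would be reused and yield a wrong answer), while the outer reduction must turn \emph{every} name of $f(\omega)$ positive after some finite prefix $\sigma$ yet stay negative on every name of $f(x_k)$; validity then forces that, for all large $k$, no name of $f(x_k)$ extends $\sigma$, even though $f(x_k) \to f(\omega)$ topologically. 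The delicate step is to promote this separation at the level of name-prefixes into a genuine open neighbourhood of $f(\omega)$ avoided by infinitely many $f(x_k)$, thereby recovering an actual topological jump and contradicting topological continuity. This is exactly the gap between prefix-separation and open separation that distinguishes admissible from non-admissible representations, and I expect bridging it to require the $T_0$ hypothesis together with Schr\"oder's analysis (or, equivalently, an explicit construction in the contrapositive of a realizer discontinuous $f$ with $\lpo \nleqW^* f$ whenever $\mathbf{X}$ fails to be admissible). Once the coincidence is secured, the two Schr\"oder citations close the argument.
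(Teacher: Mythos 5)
Your $1 \Rightarrow 2$ is the paper's argument almost verbatim: admissibility identifies realizer continuity with topological continuity, a topological failure of continuity must be a jump at $\omega$ witnessed by some open $U$, the multivalued map $H$ and the characteristic map $\chi_U$ give $\chi_U \circ f \circ H = (\neg : \mathbb{S}\to\mathbb{S})$, hence $\lpo \leqW^* f$, and $f \leqW^* \lpo$ holds for every $f$ with domain $\omone$. That half is fine. Your $2 \Rightarrow 1$ also follows the paper's skeleton: extract from the dichotomy that topological and realizer continuity coincide for maps $(\omone) \to \mathbf{X}$, then run the proof of Schr\"oder's Lemma 11 with this coincidence in place of admissibility to get a countable pseudobase, and finish with Theorem 12 and the $T_0$ hypothesis.

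The problem is the step you yourself single out, which your proposal does not actually prove: that under the dichotomy a topologically continuous $f : (\omone) \to \mathbf{X}$ cannot satisfy $\lpo \leqW^* f$. Your dissection of a hypothetical reduction is correct as far as it goes (the inner functional must send $0^\omega$ to the name of $\omega$, and the outer functional forces, for each name $q$ of $f(\omega)$, a prefix of $q$ extended by no name of $f(x_k)$ for large $k$), but the ``promotion'' of this prefix-separation to an open neighbourhood of $f(\omega)$ avoiding infinitely many $f(x_k)$ is not available in a general represented space: one can build representations in which a sequence converges topologically while every name of the limit is prefix-separated from all names of all sequence members (the convergence being mediated by auxiliary points whose alternative names force any open set containing the limit to swallow the tail of the sequence). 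So the contradiction cannot be extracted from the single function $f$ and its reduction alone; any correct argument would have to exploit the dichotomy hypothesis for other functions on $\mathbf{X}$ as well. Deferring the step to ``the $T_0$ hypothesis together with Schr\"oder's analysis'' does not close it either: in the intended architecture, $T_0$ and the two cited results of Schr\"oder are applied only after the coincidence is secured, to convert a pseudobase into admissibility, and they say nothing about the coincidence itself. You should know that the paper is no more forthcoming here: its proof of $2 \Rightarrow 1$ simply asserts that the coincidence was ``already established'' by the $1 \Rightarrow 2$ construction, which only yields the two implications you already have (realizer continuity implies topological continuity; topological discontinuity implies $\lpo \leqW^* f$). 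So your proposal has correctly located, but not filled, a gap that the paper's own text glosses over; as a standalone proof it is incomplete exactly there.
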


To see that we cannot drop the $T_0$ requirement from the preceding proposition, consider
the space $\mathfrak{T}$ of Turing degrees. This is a non-trivial space with the indiscrete topology, so in
particular not admissible. However, every function $f : (\omone) \to \mathfrak{T}$ is continuous.

\begin{example}
Consider the decimal reals $\mathbb{R}_{10}$. As this space is $T_0$ but not admissible, we know
that there must be a function $f : \omone \to\mathbb{R}_{10}$ with $1 \leW^* f \leW^* \lpo$. We find one such example
in $f(n) = (-1)^n2^{
-n}$, $f(\omega) = 0$.  This function satisfies $f \equivW \C_2$.
\end{example}

\section{Outlook}
We believe that our main result, Theorem \ref{theo:main} is the $n = 1$ case of a more general result, which we hope to provide in future work. There is further potential follow-up research. The possible directions include:

\begin{enumerate}
\item How does the Weihrauch degree of $\neqX{\mathbf{X}}$ vary with properties of the space $\mathbf{X}$? Of particular interest would be to classify $\neqX{\mathbb{Q}}$ (where $\mathbb{Q}$ is equipped with the Euclidean topology); compare the discussion regarding the degree of overt choice on $\mathbb{Q}$ in \cite{overt-choice-debrecht-pauly-schroder}.
\item What are further interesting dichotomies in the continuous Weihrauch degrees (or substructures thereof), akin to the results in Subsection \ref{subsec:dichotomies}?
\item Can we fully classify the continuous Weihrauch degrees of multivalued functions with domain $(\omone)$? Besides our result in Section \ref{sec:domainanalysis}, also the notion of weak $k$-continuity and results thereabout from \cite{uftring2} seem relevant.
\end{enumerate}

\section*{Acknowledgements}

The authors are very grateful to Vittorio Cipriani, Eike Neumann, Cécilia Pradic and Manlio Valenti for many helpful discussions on several parts of the paper. Soldà's work was funded by an LMS Early Career Fellowship (ref. 2021-20), and by the FWO grant G0F8421N.

\bibliographystyle{eptcs}
\bibliography{References}

\end{document}